\newtheorem{thm}{Theorem}[section] 
\newtheorem{proposition}[thm]{Proposition}
\newtheorem{lemma}[thm]{Lemma}
\newtheorem{corollary}[thm]{Corollary}
\theoremstyle{definition}
\newtheorem{remark}[thm]{Remark}
\newtheorem{example}[thm]{Example}
\renewcommand{\epsilon}{\varepsilon}
\newcommand{\dv}{\text{ }dV}
\numberwithin{equation}{section}
\title{Construction of \texorpdfstring{$r$}{r}-harmonic submanifolds}
\author{José Miguel Balado-Alves}
\address{Universit\"at M\"unster, Mathematisches Institut\\
Einsteinstr. 62\\
48149 M\" unster\\
Germany}
\email{jose.balado@uni-muenster.de}
\author{Anna Siffert}
\address{Universit\"at M\"unster, Mathematisches Institut\\
Einsteinstr. 62\\
48149 M\" unster\\
Germany}
\email{asiffert@uni-muenster.de}
\date{\today}
\subjclass[2020]{58E20, 53C42, 53C43}
\keywords{biharmonic maps, biharmonic submanifolds, $r$-harmonic submanifolds, cohomogeneity one actions}
\begin{document}

\begin{abstract}
    We provide a construction method for biharmonic submanifolds in cohomogeneity one manifolds. In particular, we give new examples of biharmonic submanifolds and study the normal index of these submanifolds. We use this strategy to construct metrics on the sphere admitting biharmonic non-minimal hypersurfaces with three distinct principal curvatures. Finally, we perform a similar study of $r$-harmonic submanifolds of cohomogeneity one manifolds.
\end{abstract}

\maketitle

\section{Introduction}
Throughout let $(M,g)$ and $(N,h)$ be closed Riemannian manifolds. When selecting preferred points within $\mathcal{C}^{\infty}(M,N)$, i.e. the space of smooth maps from $M$ to $N$, harmonic maps are a natural choice. They are defined to be the critical points of the energy functional $E$ which is given by
$$
E(\phi)=\frac{1}{2}\int_M |d \phi|^2 \dv_g,
$$
where $dV_g$ denotes the volume form of $M$ with respect to $g$.
Harmonic maps are characterized by the vanishing of the tension field $\tau$, where 
$$
\tau(\phi):= \operatorname{Tr}_g \nabla^{\phi} d\phi.
$$
Here $\nabla^{\phi}$ denotes the pull-back connection of the pull-back bundle $\phi^{*}TN$.
The theory of harmonic maps experienced a boost through the seminal work of Eells and Sampson~\cite{EellsSampson} from 1964 and it turned into a very rich and active area of research in subsequent years. Note that if $\phi$ is an isometric immersion, then $\phi$ is harmonic if and only if $\phi(M)$ is a minimal submanifold of $N$. 

\smallskip

Biharmonic maps are a natural generalization of harmonic maps proposed by Eells and \\ Lemaire~\cite{EellsLemaire}. They are critical points of the bienergy functional $E_2$ which is given by
$$
E_2(\phi)=\frac{1}{2}\int_M |\tau (\phi)|^2 \dv_g.
$$
Biharmonic maps are characterized by the vanishing of the  \textit{bitension field} $\tau_2$, where
$$
\tau_2(\phi):= -\Delta \tau (\phi) +\textrm{Tr}_g R^N(\tau(\phi), d\phi) d\phi.
$$
Here $\Delta$ denotes the rough Laplacian acting on sections of the pullback bundle $\phi^*TN$; it is defined by $\Delta = \operatorname{Tr}_g (\nabla_{\nabla}^{\phi} - \nabla^{\phi} \nabla^{\phi})$, where $\nabla$ denotes the connection on $M$.
Further, $R^N$ denotes the curvature tensor of $N$. Since a harmonic map is trivially biharmonic, the main interest lies in the study of non-harmonic biharmonic maps, the so-called \emph{proper biharmonic maps}. 

\smallskip

If $\phi$ is a biharmonic isometric immersion, we say that $M$ is a biharmonic submanifold of $N$. 
The study of biharmonic submanifolds was initiated by the pioneering works of Chen~\cite{Chen} and Jiang~\cite{Jiang} and is nowadays a very active research area, see e.g. the survey ~\cite{MontaldoOniciucSurvey}. 
We also refer the reader to~\cite{OuChenBook} for a more detailed discussion on the geometry of biharmonic submanifolds.

\smallskip

The existence of biharmonic maps seems to depend heavily on the curvature of the target metrics, as one can, for example, deduce from the research on the biharmonic heat flow, see e.g.~\cite{Lamm}, or the study of Chen's conjecture, see e.g. ~\cite{FuYuZhan}, which is that every biharmonic submanifold in the Euclidean space is minimal.

\smallskip

Constructing biharmonic submanifolds is, in general, a challenging task since they are solutions to a fourth-order semilinear elliptic PDE, i.e. solutions of $\tau_2(\phi)=0$. A frequently used strategy is to impose some symmetry conditions on the ambient manifold and study solutions $\phi$ of $\tau_2(\phi)=0$ which are invariant with respect to this symmetry. Classic examples appear under this ansatz: the $\operatorname{SO}(n+1)$-invariant immersion $\mathbb{S}^{n}(\frac{1}{\sqrt{2}}) \to \mathbb{S}^{n+1}$; the ${\operatorname{SO}(k+1) \times \operatorname{SO}(n-k+1) }$-invariant generalized Clifford Torus $\mathbb{S}^k(\frac{1}{\sqrt{2}}) \times \mathbb{S}^{n-k}(\frac{1}{\sqrt{2}}) \to \mathbb{S}^{n+1}$, see~\cite{Jiang}; or the biharmonic tubes over a totally geodesic $\mathbb{CP}^{n-p}$ which are invariant under the $\operatorname{SU}(p+1) \times \operatorname{SU}(n-p)$-action on $\mathbb{CP}^n$, see~\cite{Sasahara}.

\smallskip

The goal of this paper is to develop a unified framework for these (and others) examples, namely, we study biharmonic maps and biharmonic submanifolds of cohomogeneity one manifolds. 
More precisely, for $G$ a compact Lie group acting with cohomogeneity one on $(M,g)$, we study under which conditions the principal orbits of the cohomogeneity one action are biharmonic submanifolds of $M$. This approach recovers a large family of already known examples of proper biharmonic hypersurfaces, in particular the three examples mentioned above, and yields a general tool to construct biharmonic hypersurfaces of cohomogeneity one manifolds.

\smallskip

We use the developed theory to address the following problems: It has been proved in~\cite{Balmus4dim} that $(\mathbb{S}^{n+1},g_{\text{can}})$ does not admit a compact proper biharmonic hypersurface with constant mean curvature and three distinct principal curvatures. We now raise a question of a different nature: to what extent is this property characteristic for the round metric $g_{\text{can}}$? Do all metrics on $\mathbb{S}^{n+1}$ satisfy this property? We give a negative answer to this question, more precisely, we prove that there exists a metric $g_1$ in $\mathbb{S}^{n+1}$ admitting a proper biharmonic hypersurface with three distinct principal curvatures. To achieve this we make use of the tools provided before and a correct deformation of the metric preserving homogeneity of the orbits. We apply a similar strategy to construct proper biharmonic hypersurfaces in a deformation of the symmetric space $\operatorname{SU}(3)$ and $\mathbb{S}^2 \times \mathbb{S}^2$.

\smallskip

Apart from their existence, one of the most relevant questions concerning biharmonic maps is their stability. We make a systematic approach to stability for biharmonic orbits of cohomogeneity one manifolds and apply it to some of the examples constructed. In particular, we compute the normal index for some proper biharmonic hypersurfaces of the quaternionic projective space $(\mathbb{HP}^n, g_{\text{can}})$.

\smallskip

Ultimately, we show that a similar study can be carried out for higher-order energy functionals. We recover known examples and obtain new families of polyharmonic hypersurfaces in cohomogeneity one manifolds. Moreover, we construct foliations all of whose leaves are $r$-harmonic submanifolds, for each $r\in\mathbb{N}$ with $r\geq 2$.

\medskip

\noindent{\textbf{Organisation:}}
In Section\,\ref{prelim} we provide preliminaries. We construct biharmonic hypersurfaces in cohomogeneity one manifolds and
develop the theory of biharmonic maps between cohomogeneity one manifolds
whose orbit space is isometric to a closed interval in Section\,\ref{biharm}. In Section\,\ref{cheeger} we use Cheeger deformations to construct a biharmonic hypersurface with three principal curvatures in \texorpdfstring{$\mathbb{S}^7$}{S7}. In Section\,\ref{secstab} we study the stability of the biharmonic orbits constructed in Section\,\ref{biharm}.
We study the $r$-harmonic equation of a principal orbit in a cohomogeneity one manifold in Section\,\ref{rharm} and provide examples of $r$-harmonic hypersurfaces in cohomogeneity one manifolds. Section\,\ref{SecFoliations} is devoted to the construction of polyharmonic foliations.

\medskip
{\bf Acknowledgments.}
Jos\'{e} Miguel Balado-Alves gratefully acknowledges the
supports of the Deutsche Forschungsgemeinschaft (DFG, German Research Foundation) - Project-ID 427320536 - SFB 1442, as well as Germany's Excellence Strategy EXC 2044 390685587, Mathematics M\"unster: Dynamics-Geometry-Structure. Anna Siffert gratefully acknowledges the support of the Deutsche Forschungsgemeinschaft (DFG, German Research Foundation) - Project-ID 427320536 - SFB 1442. Last but not least, the authors thank Stefano Montaldo and Cezar Oniciuc very much for pointing out a missing factor in a previous version of this manuscript.

\pagebreak

\section{Preliminaries}
\label{prelim}
In the first subsection, we provide preliminaries on cohomogeneity one manifolds. The second subsection contains a brief introduction of the relevant energy functionals.

\subsection{Set up and basics on cohomogeneity one manifolds}

In this subsection, we establish notation and recall the very basics of the theory of cohomogeneity one manifolds. We refer the reader to \cite{AlexandrinoBettiol,GroveZiller,PuettmanSiffert} and the references therein. 

\smallskip

Let $(M,g)$ be a connected compact Riemannian manifold endowed with an isometric action $G\times M\rightarrow M$ of a compact Lie group $G$ on $M$ such that the orbit space $M/G$ is isometric to a closed interval $[0,L]$. We write $\pi: M \to M/G$ for the quotient map. Then the fiber $\pi^{-1}(t)$ of any interior point $t \in (0,L)$ is a principal orbit, while $\pi^{-1}(0)$ and $\pi^{-1}(L)$ are non-principal orbits.

We fix a unit-speed normal geodesic $\gamma$, i.e., a geodesic $\gamma:[0,L] \to M$ with $\gamma(0) \in \pi^{-1}(0)$ and $\gamma(L) \in \pi^{-1}(L)$ that passes through all orbits perpendicularly.
The isotropy groups $H:=G_{\gamma(t)}$ are all equal for $t \in (0,L)$.
Then every invariant smooth metric $g$ on the regular part of $M$ can be written as
\begin{equation}\label{metric}
    g=dt^2 + g_t, \qquad t \in (0,L),
\end{equation}
where $g_t$ is a smooth one-parameter family of homogeneous metrics on $G/H$. For each $t \in (0,L)$, there exists an orthogonal splitting of the tangent space $T_{\gamma(t)} M$ as the direct sum of
the tangent space $T_{\gamma(t)}(G \cdot \gamma(t))$ of the orbit $G \cdot \gamma(t)$ and the normal space to the orbit $G \cdot \gamma(t)$, which is spanned by $\dot{\gamma}(t)$.

Let $Q$ be a fixed biinvariant metric on $G$. Further, we denote the orthogonal complement of ${\operatorname{Lie}(H)=\mathfrak{h}}$ in $\mathfrak{g}$ by $\mathfrak{n}$.
Then there is a smooth $1$-parameter family of $Q$-symmetric $\operatorname{Ad}(H)$-equivariant automorphisms $P_t: \mathfrak{n} \to \mathfrak{n}$ defined by
$$
g( X^*, Y^* )_{\vert \gamma(t)}=g_t(X,Y)=Q(X, P_t Y),
$$
where for every $X\in\mathfrak{g}$, we denote by $X^*$ the associated action field, i.e.
$$
X^*_{\vert p} = \frac{d}{ds}(\operatorname{exp}sX \cdot p)_{\vert s=0}.
$$

The family $P_t$ encodes the homogeneous metrics $g_t$ and contains information about the extrinsic geometry of principal orbits. For instance, it is possible to express the shape operator $\operatorname{S}_t$ of the orbit $G \cdot \gamma(t)$ in terms of these endomorphisms:
\begin{equation}\label{ShapeOp}
    \operatorname{S}_t X^* = -\tfrac{1}{2} (P_t^{-1} \dot{P}_t X)^*,
\end{equation}
see e.g.~\cite{PuettmanSiffert}.

\smallskip

In this manuscript. we restrict ourselves to a special class of metrics that will be particularly useful. Suppose $\mathfrak{n}=\mathfrak{n}_1 \oplus \dots \oplus \mathfrak{n}_{\ell}$ is a decomposition into mutually orthogonal $\operatorname{Ad}(H)$-invariant subspaces. Consider, along $\gamma(t)$, the metric
\begin{equation}\label{DiagonalMetricDef}
    g_t:= \sum_{i=1}^{\ell}f_i(t)^2 Q_{\vert \mathfrak{n}_i},
\end{equation}
where $f_i: (0,L) \to \mathbb{R}$ are positive functions satisfying appropriate smoothness conditions as $t \to 0$ and $t \to L$, and $Q$ is a biinvariant metric on $G$. The resulting biinvariant metric $g=dt^2+g_t$ is called a \emph{diagonal} cohomogeneity one metric.

\pagebreak

\subsection{Energy functionals}
We introduce here some concepts and notations about higher-order energy functionals used throughout the manuscript. Unless otherwise stated, we assume the map ${\phi: M \to N}$ to be smooth.

\smallskip

Recall from the introduction that the energy functional and the bienergy functional are given by
$$
E(\phi)=\frac{1}{2}\int_M |d \phi|^2 \dv_g,
$$
and
$$
E_2(\phi)=\frac{1}{2}\int_M |\tau (\phi)|^2 \dv_g,
$$
respectively. 
\smallskip

More generally, one can define higher-order energy functionals as follows. If $ r = 2 s $ and $ s \in\mathbb{N}$ with $s\geq 1 $, we define the $ r $-energy functional as
\begin{equation*}
    E_{ 2 s } ( \phi ) := \frac{ 1 }{ 2 } \int_{ M } | \Delta^{ s - 1 } \tau ( \phi ) |^2 \dv_g.
\end{equation*}
If, on the other hand, $r=2s+1$, then
\begin{equation*}
    E_{ 2 s + 1 } ( \phi ) := \frac{ 1 }{ 2 } \int_{ M } \sum_{ j = 1 }^n | \nabla _{ e_j } \Delta^{ s - 1 } \tau ( \phi ) |^2 \dv_g,
\end{equation*}
where $ \{ e_i \}_{ i = 1 }^m $ is a local orthonormal frame on $ M $. We refer the reader to~\cite{PolyharmonicPseudo, PolyharmonicContinuation, PolyharmonicSpaceForms, PolyharmonicHomogeneous} for recent developments in the topic of polyharmonic maps and polyharmonic submanifolds.

\smallskip

Critical points of the $r$-energy functional are called $r$-harmonic maps.
It was proven by Maeta \cite{maeta2012k} that a smooth map $\phi: M \to N$ is $r$-harmonic if the $r$-tension field $\tau_r$ vanishes, where the $2s$-tension field is given by
\begin{align}
\label{maeta-1}
    \tau_{2s}(\phi)\notag&=\Delta^{2s-1}\tau(\phi)-\sum_{i=1}^mR^N(\Delta^{2s-2}\tau(\phi),d\phi(e_i))d\phi(e_i)\\\notag&\hspace{0.4cm}-\sum_{i=1}^m\sum_{\ell=1}^{s-1}\big\{R^N(\nabla^{\phi}_{e_i} \Delta^{s+\ell-2}\tau(\phi),\Delta^{s-\ell-1}\tau(\phi))d\phi(e_i)\\&\hspace{1.8cm}-R^N(\Delta^{s+\ell-2}\tau(\phi),\nabla^{\phi}_{e_i} \Delta^{s-\ell-1}\tau(\phi))d\phi(e_i)\big\}
\end{align}
and the $(2s+1)$-tension field is given by
\begin{align}
\label{maeta-2}
    \tau_{2s+1}(\phi)\notag&=\Delta^{2s}\tau(\phi)-\sum_{i=1}^mR^N(\Delta^{2s-1}\tau(\phi),d\phi(e_i))d\phi(e_i)\\\notag&\hspace{0.4cm}-\sum_{i=1}^m\sum_{\ell=1}^{s-1}\big\{R^N(\nabla^{\phi}_{e_i} \Delta^{s+\ell-1}\tau(\phi),\Delta^{s-\ell-1}\tau(\phi))d\phi(e_i)\\\notag&\hspace{1.8cm}-R^N(\Delta^{s+\ell-1}\tau(\phi),\nabla^{\phi}_{e_i} \Delta^{s-\ell-1}\tau(\phi))d\phi(e_i)\big\}\\&\hspace{0.4cm}-R^N(\nabla^{\phi}_{e_i} \Delta^{s-1}\tau(\phi),\Delta^{s-1}\tau(\phi))d\phi(e_i).
\end{align}
Here $\{e_i\}_{i=1}^m$ is a local orthonormal frame of $M$. 

\pagebreak

\section{Biharmonicity and cohomogenity one manifolds}
\label{biharm}

In this section, we study biharmonic hypersurfaces of cohomogeneity one manifolds and develop the theory of biharmonic maps between cohomogeneity one manifolds whose orbit space is isometric to a closed interval.

\subsection{Biharmonic hypersurfaces}\label{SectionHypersurfaces}

Let $(M^{n+1},g)$ be a compact Riemannian manifold endowed with an isometric action $G \times M \to M$ of a compact Lie group $G$ on $M$ such that the orbit space $M/G$ is isometric to the closed interval $[0,L]$, where $L\in\mathbb{R}_{+}$. Moreover, we denote by $\gamma$ a unit speed normal geodesic such that $\gamma(0)$ is contained in one of the non-principal orbits. Fix $Q$ a biinvariant metric on $G$ and define for $t \in (0, L)$ the family of endomorphisms $P_t: \mathfrak{n} \to \mathfrak{n}$ by
$$
g( X^*, Y^* )_{\vert \gamma(t)}=Q(X, P_t Y).
$$

In the following theorem, we determine the bitension field of the isometric immersions of the orbits $G\cdot \gamma(t)$ in the cohomogeneity one manifold $M$. 

\begin{thm}\label{hypersurface}
    Let $(M^{n+1},g)$ be a cohomogeneity one manifold with $M/G=[0,L]$. Assume that $g$ is a diagonal cohomogeneity one metric. Then the bitension field of the isometric immersion $\iota: G \cdot \gamma(t) \to M$ is given by
    \begin{equation}
    \label{reduced}
        \tau_2(\iota) = -\tfrac{1}{2}\operatorname{trace} (P_t^{-1} \ddot{P}_{t}) \, \tau(\iota).
    \end{equation}
\end{thm}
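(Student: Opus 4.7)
The plan is to reduce the identity to the standard form of the bitension field on a hypersurface with parallel mean curvature vector, and then to translate the resulting geometric quantities into the algebraic data encoded by $P_t$.

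The first observation is that the principal orbit $\Sigma := G \cdot \gamma(t)$ is homogeneous under $G$, so every $G$-invariant smooth function on $\Sigma$ is constant. Taking the trace of \eqref{ShapeOp} gives $H := \operatorname{tr}(\operatorname{S}_t) = -\tfrac{1}{2}\operatorname{tr}(P_t^{-1}\dot P_t)$, a constant along $\Sigma$, and with $\nu := \dot\gamma$ the outward unit normal one has $\tau(\iota) = H\,\nu$ with $dH = 0$, and in particular $\nabla^{\perp}\tau(\iota) = 0$.

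Next I would unpack the definition of the bitension field. Pick a geodesic orthonormal frame $\{e_i\}$ of $\Sigma$ at a point $p$. Since $H$ is constant, the rough Laplacian commutes with multiplication by $H$, and combining Weingarten $\nabla^M_X \nu = -\operatorname{S}_t(X)$ with Gauss yields at $p$
\[
 -\Delta\nu = -\sum_i (\nabla^{\Sigma}_{e_i}\operatorname{S}_t)(e_i) - |\operatorname{S}_t|^2\, \nu.
\]
Adding the curvature term $H \sum_i R^M(\nu, e_i)e_i$, the normal part of $\tau_2(\iota)$ equals $H\bigl(-|\operatorname{S}_t|^2 + \operatorname{Ric}^M(\nu,\nu)\bigr)$; the tangential part is treated via the Codazzi equation, whose trace, combined with $dH = 0$, identifies $\sum_i (\nabla^{\Sigma}_{e_i}\operatorname{S}_t)(e_i)$ with the tangential projection of $\operatorname{Ric}^M(\nu,\cdot)$, and the same quantity reappears with opposite sign from $\sum_i R^M(\nu,e_i)e_i$, so the tangential components cancel. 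Hence $\tau_2(\iota) = \bigl(-|\operatorname{S}_t|^2 + \operatorname{Ric}^M(\nu,\nu)\bigr)\tau(\iota)$.

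To finish I would express both invariants in terms of $P_t$. From \eqref{ShapeOp} one reads off $|\operatorname{S}_t|^2 = \operatorname{tr}(\operatorname{S}_t^2) = \tfrac{1}{4}\operatorname{tr}\bigl((P_t^{-1}\dot P_t)^2\bigr)$. For the radial Ricci curvature of the metric \eqref{metric}, the Riccati equation for the shape operator of the distance slices gives the standard identity
\[
 \operatorname{Ric}^M(\dot\gamma,\dot\gamma) = -\tfrac{1}{2}\operatorname{tr}(P_t^{-1}\ddot P_t) + \tfrac{1}{4}\operatorname{tr}\bigl((P_t^{-1}\dot P_t)^2\bigr),
\]
after identifying $g_t$ with $P_t$ in a $Q$-orthonormal basis of $\mathfrak{n}$. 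The two terms quadratic in $P_t^{-1}\dot P_t$ cancel, leaving exactly $-\tfrac{1}{2}\operatorname{tr}(P_t^{-1}\ddot P_t)$, which gives \eqref{reduced}. The main obstacle is bookkeeping with signs and conventions---Weingarten, the bitension field, and the paper's definition of the rough Laplacian must all be kept coherent, and the Codazzi cancellation must be verified using only $dH = 0$; once these are in place, the final identity reduces to a one-line algebraic cancellation between $|\operatorname{S}_t|^2$ and the quadratic piece of $\operatorname{Ric}^M(\nu,\nu)$.
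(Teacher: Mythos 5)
Your normal-component computation is correct and is essentially the paper's argument in different clothing: the paper likewise computes $\langle \Delta T, T\rangle = -\tfrac{1}{4}\operatorname{trace}(P_t^{-1}\dot P_t)^2$ (your $-|\operatorname{S}_t|^2$) and then adds the radial Ricci curvature via the Grove--Ziller identity, which is exactly your Riccati formula. The gap is in the tangential part. The traced Codazzi equation with $dH=0$ gives $\operatorname{div}\operatorname{S}_t=-(\operatorname{Ric}^M(\nu))^{\top}$, while the tangential projection of $\sum_i R^M(\nu,e_i)e_i$ equals $+(\operatorname{Ric}^M(\nu))^{\top}$; on the other hand the tangential part of $-\Delta(H\nu)$ is $-H\operatorname{div}\operatorname{S}_t=+H(\operatorname{Ric}^M(\nu))^{\top}$. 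So the two tangential contributions carry the \emph{same} sign and sum to $2H(\operatorname{Ric}^M(\nu))^{\top}$ rather than cancelling. This is precisely the tangential term $-2H(\operatorname{Ric}^N(\xi))^{\top}$ in Ou's characterization of biharmonic hypersurfaces, which for a CMC hypersurface does not vanish automatically. If your cancellation were valid, every CMC hypersurface would have purely normal bitension field, which is false.

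The theorem is nevertheless true because for a \emph{diagonal} cohomogeneity one metric one has $(\operatorname{Ric}^M(\dot\gamma))^{\top}=0$, equivalently $\operatorname{div}\operatorname{S}_t=0$; this is exactly where the diagonal hypothesis, which your argument never invokes, must enter. The paper establishes the needed vanishing by a Koszul-formula computation showing $(\nabla_{E_i^*}E_i^*)^{\operatorname{tan}}=0$ via $Q(E_j,[E_i,P_tE_i])=0$, together with the vanishing of the mixed curvature components $g(R(e_0,E_i^*)E_i^*,E_j^*)_{\vert\gamma(t)}=0$ from Grove--Ziller. Once you insert this input the tangential part dies for the right reason and your argument closes; the remaining identifications of $|\operatorname{S}_t|^2$ and $\operatorname{Ric}^M(\dot\gamma,\dot\gamma)$ with traces of $P_t^{-1}\dot P_t$ and $P_t^{-1}\ddot P_t$ are correct and match the paper.
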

\begin{proof}
    Take $E_1, \dots, E_n \in \mathfrak{n}$ such that $E_{1 \vert \gamma(t)}^*, \dots, E_{n\vert \gamma(t)}^*$ form an orthonormal basis of \\ ${T_{\gamma(t)} (G \cdot \gamma(t))}$, $t \in (0,L)$, and write $T$ for the unit normal field to the principal orbits given by \\ ${T_{\vert g\cdot \gamma(t)}=g \cdot \dot{\gamma}(t)}$. 
    
    By its very definition, the tension field of the isometric immersion $\iota$ is given by
    $$
    \tau = \sum_{i=1}^n (\nabla^M_{E_i^*}E_i^*)^{\perp} = -\tfrac{1}{2}\sum_{i=1}^n g( E_i^*, (P_t^{-1} \dot{P}_t E_i)^* )_{\vert \gamma(t)} \, T = -\tfrac{1}{2} \operatorname{trace} (P_t^{-1} \dot{P}_t) \, T,
    $$
    where we made use of equation~\eqref{ShapeOp}. Here and below we omit $\iota$. 
    Since $\operatorname{trace} (P_t^{-1} \dot{P}_t)$ depends on $t$ only, we obtain
    $$\Delta \tau = -\tfrac{1}{2} \operatorname{trace} (P_t^{-1} \dot{P}_t) \Delta T.$$

    Recall $\Delta = \operatorname{Tr}_g (\nabla_{\nabla}^{\iota} - \nabla^{\iota} \nabla^{\iota})$. In what follows we determine the tangential and normal components of $\Delta T$ with respect to the orbits separately.

    \smallskip

    We start by determining the normal part of $\Delta T$.
    Note that $\operatorname{Tr}_g \nabla^{\iota}_{\cdot} \nabla^{\iota}_{\cdot} T = \frac{1}{2} \operatorname{Tr}_g \nabla^{\iota}_{\cdot} (P_t^{-1} \dot{P}_t \, \,\cdot)^*$. Its normal component is hence given by
    $$
    g( \operatorname{Tr}_g \nabla^{\iota}_{\cdot} \nabla^{\iota}_{\cdot} T, T )_{\vert \gamma(t)} = - \tfrac{1}{4} \sum_{i=1}^n |(P_t^{-1} \dot{P}_t E_i)^*|^2_g = -\tfrac{1}{4}\operatorname{trace} (P_t^{-1} \dot{P}_t)^2.
    $$

\smallskip

    Next, we determine the tangential part of $\Delta T$.
    Let $\mathfrak{n}=\mathfrak{n}_1 \oplus \dots \oplus \mathfrak{n}_{\ell}$ be a decomposition in mutually orthogonal $\operatorname{Ad}(H)$-invariant subspaces. For $E_i \in \mathfrak{n}_k$ we can write $(\nabla^{\iota}_{E_i^*} \nabla^{\iota}_{E_i^*} T)^{\text{tan}}=\frac{f_k'}{f_k} (\nabla^{\iota}_{E_i^*}E_i^*)^{\text{tan}}$, see~\eqref{DiagonalMetricDef}. By Koszul formula we get $g( \nabla_{E_i^*} E_i^*, E_j^* )_{\vert \gamma(t)} =  g( [E_i, E_j]^*, E_i^* )_{\vert \gamma(t)}$, and thus we have
    $$
     g( [E_i, E_j]^*, E_i^* )_{\vert \gamma(t)} =  Q([E_i, E_j], P_t E_i) = - Q(E_j, [E_i, P_t E_i]).
    $$
    Since the metric is diagonal we obtain $(\nabla_{E_i^*} E_i^*)^{\operatorname{tan}}=0$. In a similar manner $\operatorname{Tr}_g \nabla^{\iota}_{\nabla_{\cdot} \cdot} T=0 $. 

    \smallskip
    
    Combining the preceding considerations
    $$
    \Delta \tau = \tfrac{1}{4} \operatorname{trace} (P_t^{-1} \dot{P}_t)^2 \, \tau.
    $$
    The claim then follows by applying the identity $$\operatorname{Tr}_g R(T, \cdot\,) \,\cdot =  \frac{1}{4} \operatorname{trace} ((P_t^{-1}\dot{P}_t)^2 - 2P_t^{-1} \ddot{P}_t) \, T,$$ see~\cite[Corollary~1.10]{GroveZiller}.
\end{proof}

\pagebreak

Using this approach, we can easily retrieve familiar examples and construct new proper biharmonic solutions.

\begin{example}\label{sphere1}
    Take the round sphere $(\mathbb{S}^{n+1}, g_{\text{can}})$ endowed with the natural $\operatorname{SO}(n+1)$-action. The orbit space $\mathbb{S}^{n+1}/\operatorname{SO}(n+1)$ is isometric to the closed interval $[0, \pi]$. Fix the normal geodesic
    $$
    \gamma(t)= \cos(t) \, \xi_1 + \sin(t) \, \xi_2, \qquad t \in [0,\pi],
    $$
    where $\{\xi_i\}_{i=1}^{n+2}$ is the standard basis of $\mathbb{R}^{n+2}$, and take the biinvariant metric $Q(X,Y)=\operatorname{trace}X^{\text{tr}}Y$, $X,Y \in \mathfrak{n}$. In this case, we can write $g_{\text{can}}=dt^2 + \sin^2 t \, Q_{\vert \mathfrak{n}}$. The endomorphisms $P_t$ are thus given by
    $$
    P_t = \sin^2 (t) \mathds{ 1 }_{ n }.
    $$
    Consequently, the equation $\operatorname{trace}P_t^{-1} \ddot{P}_{t}=0$ is equivalent to
    $$
    \cot^2 t =1.
    $$
    This is, the orbit $\operatorname{SO}(n+1) \cdot \gamma(\frac{\pi}{4})$ is proper biharmonic. This recovers the already known example $\mathbb{S}^{n}(\frac{1}{\sqrt{2}}) \to \mathbb{S}^{n+1}$.
\end{example}

\begin{example}\label{sphere2}
   We consider now $(\mathbb{S}^{n+1}, g_{\text{can}})$ endowed with the $\operatorname{SO}(k+1) \times \operatorname{SO}(n-k+1)$-action, where ${1 \leq k \leq n-1}$. Fix the normal geodesic
    $$
    \gamma(t)=\cos(t) \, \xi_1 + \sin(t) \, \xi_{k+2}, \qquad t \in [0,\tfrac{\pi}{2}],
    $$
    where $\{\xi_i\}_{i=1}^{n+2}$ is the standard basis of $\mathbb{R}^{n+2}$.
    There exists an orthonormal basis of $(\mathfrak{n},Q)$ such that for any $t \in (0, \frac{\pi}{2})$
    \begin{equation*}
        P_t = \begin{pmatrix}
            \cos^2 t \, \mathds{ 1 }_{ k } & {} \\
            {} & \sin^2 t \, \mathds{ 1 }_{ n-k }
        \end{pmatrix}.
    \end{equation*}
    Hence, from Theorem~\ref{hypersurface} we obtain that the biharmonic equation is equivalent to the identity
    $$
    k\tan^2 t + (n-k) \cot^2 t = n.
    $$
    We recover in this case the solutions $\mathbb{S}^k(\frac{1}{\sqrt{2}}) \times \mathbb{S}^{n-k}(\frac{1}{\sqrt{2}}) \to \mathbb{S}^{n+1}$, which were first obtained in~\cite{Jiang}. Note that this biharmonic hypersurface is non-minimal if and only if $n \neq 2k$.
\end{example}

\begin{example}\label{CPn}
    Consider the complex projective space $\mathbb{CP}^n$ equipped with the Fubini-Study metric and the natural $\operatorname{SU}(p+1) \times \operatorname{SU}(n-p)$-action, $ 1\leq p \leq n-1$. Take the geodesic
    $$
    \gamma ( t ) = [ \cos(t) \, \xi_1 +  \sin(t) \, \xi_{p+2} ], \qquad t \in [0, \tfrac{\pi}{2}],
    $$
    expressed in homogeneous coordinates and the inner product $Q(X,Y)=-\frac{1}{2}\operatorname{trace}XY$ of $\mathfrak{n}$. There exists a basis on $(\mathfrak{n},Q)$ such that for every $t \in (0, \frac{\pi}{2})$ we can write
    \begin{equation*}
        P_t = \begin{pmatrix}
            \cos^2 t \, \mathds{ 1 }_{ 2 p } & {} & {} \\
            {} & \sin^2 t \, \mathds{ 1 }_{ 2 ( n - p - 1 ) } & {} \\
            {} &  {} & \tfrac{ \eta^2 }{ 4 } \sin^2 2 t
        \end{pmatrix},
    \end{equation*}
    where $\eta^2 = 2 \frac{ n - p - 1 }{ n - p } + 2 \frac{ p }{ p + 1 }$. For a more detailed discussion see, for instance, \cite{Balado, Urakawa}.
    
    \pagebreak
    Theorem~\ref{hypersurface} yields the following equation, where $x=\cos^2 t$:
    \begin{equation*}
        4(n+1)x^2-2(n+2p+3)x+2p+1=0.
    \end{equation*}
    This recovers the homogeneous biharmonic Hopf hypersurfaces in $\mathbb{CP}^n$, see~\cite{Balado2, Sasahara}. They are all tubes over a totally geodesic $\mathbb{CP}^{n-p}$ with radius $t$ given by the biharmonic equation.
\end{example}

\begin{example}\label{revolution}
    Any surface of revolution $M^2$ generated by the profile curve $(\phi(t), 0, \psi(t))$ \\ parametrized with unit speed can be seen as a cohomogeneity one metric with the action of $\mathbb{S}^1$. The metric can be written as $g=dt^2 + \phi^2 d \theta^2$. 
    Equation\,\ref{reduced} thus reads
    \begin{equation}
    \label{ode}
    (\phi'(t))^2 + \phi(t) \phi''(t)=0.
   \end{equation}
    Consequently
    $$
    \sigma(\theta)=(\phi(t_0) \cos \frac{\theta}{\phi(t_0)}, \phi (t_0) \sin \frac{\theta}{\phi(t_0)}, \psi(t_0)),
    $$ 
    where $\theta \in [0,2\pi]$, 
    is a biharmonic curve in $M$ where $t_0$ is a solution to the equation (\ref{ode}). This recovers~\cite[Proposition~3.2]{Caddeocurves}.
\end{example}

\begin{example}\label{quaternionicprojectivespace}
    Consider the quaternionic projective space $(\mathbb{HP}^n, g_{\text{can}})$ with the cohomogeneity one action of $\operatorname{Sp}(n) \times \operatorname{Sp}(1)$ (see~\cite{Iwata}). We take the representing geodesic $\gamma(t)=z(t) \cdot o$ for $t \in [0, \frac{\pi}{2}]$, where $o$ is the origin of $\mathbb{HP}^n=\operatorname{Sp}(n+1)/\operatorname{Sp}(n)\times \operatorname{Sp}(1)$ and
    $$
    z(t)=\operatorname{exp} t \begin{pmatrix}
            E_1 & {} \\
            {} & E_1
        \end{pmatrix} \in \operatorname{Sp}(n+1),
    $$
    where $E_1$ is the $(n+1) \times (n+1)$ matrix whose entries satisfy $(E_1)_{2,1}=1, (E_1)_{1,2}=-1$ and all other entries equal zero. Consider the inner product $Q(X,Y)=-\frac{1}{4}\operatorname{trace}XY$, $X,Y \in \mathfrak{n}$. We can take an orthonormal basis of $(\mathfrak{n},Q)$ such that for every $t \in (0, \frac{\pi}{2})$
    \begin{equation*}
        P_t = \begin{pmatrix}
            \sin^2 t \, \mathds{ 1 }_{ 4(n-1) } & {} \\
            {} & \sin^2 2t \, \mathds{ 1 }_{ 3 }
        \end{pmatrix},
    \end{equation*}
    see~\cite{Urakawa} for a detailed discussion on the derivation of $P_t$.
    
    Hence, applying Theorem~\ref{hypersurface}, we get that the biharmonic hypersurfaces are tubes over quaternionic projective hyperplanes of radius $t$, such that $x=\cos^2 t$ is a root of the polynomial
    \begin{equation}\label{quaternioniceq}
        P(x):=8(n+2)x^2 - 4(n+5)x+3.
    \end{equation}
    
    We obtain two solutions $t_{\pm}$ such that $\operatorname{Sp}(n)\times \operatorname{Sp}(1) \cdot \gamma(t_{\pm})$ are biharmonic hypersurfaces. Moreover, the only minimal orbit satisfies $x=\frac{3}{2(2n+1)}$, which is not a root of $P$. As a consequence, the solutions are proper.
\end{example}

\begin{example}\label{quadric}
    The complex quadric $Q_n= \operatorname{SO}(n+2)/\operatorname{SO}(2)\times \operatorname{SO}(n)$ admits a cohomogeneity one action of $\operatorname{SO}(n+1)$, see~\cite{Uchida}. Take the representing geodesic $\gamma(t)=z(t) \cdot o$ for $t \in [0, \frac{\pi}{2}]$, where $o$ is the origin of $Q_n=\operatorname{SO}(n+2)/\operatorname{SO}(2) \times \operatorname{SO}(n)$ and $z(t)$ is the one-parameter subgroup of $\operatorname{SO}(n+2)$ given by
    \begin{equation*}
        z(t) = \begin{pmatrix}
            \cos t  & 0 & -\sin t & {} \\
            0 & 1 & 0 & {} \\
            \sin t  & 0 & \cos t & {} \\
            {} &  {} &  {} & \mathds{ 1 }_{ n-1 }
        \end{pmatrix}.
    \end{equation*}
    \pagebreak
    Take the inner product $Q(X,Y)=-\frac{1}{2}\operatorname{trace}XY$ for $X,Y \in \mathfrak{n}$. In this case we can write for every $t \in (0, \frac{\pi}{2})$ (see~\cite{Urakawa})
    \begin{equation*}
        P_t = \begin{pmatrix}
            \cos^2 t  & {} & {} \\
            {} & \mathds{ 1 }_{ n-1 } & {} \\
            {} &  {} &  \sin^2  t \, \mathds{ 1 }_{ n-1 }
        \end{pmatrix}.
    \end{equation*}
    Equation~\eqref{reduced} is hence given by
    $$
    \tan^2 t + (n-1)\cot^2 t =n.
    $$
    Substituting $x=\cos^2 t$, this is equivalent to $(nx-1)(2x-1)=0$. The solution $x=\frac{1}{n}$ corresponds to a minimal hypersurface. Hence, for $n > 2$, the orbit $\operatorname{SO}(n+1) \cdot \gamma(\frac{\pi}{4})$ is a proper biharmonic hypersurface in $Q_n$. 
\end{example}

\begin{example}\label{SU(3)}
    Take now $\operatorname{SU}(3)$ endowed with the metric $\langle X,Y \rangle = \operatorname{trace} X \bar{Y}^{\text{tr}}$. Consider the cohomogeneity one action of $G=\operatorname{SU}(3)$ itself:
    $$
    \operatorname{SU}(3) \times \operatorname{SU}(3) \to \operatorname{SU}(3), \qquad (A,B) \mapsto ABA^{\text{tr}}.
    $$
    A normal geodesic $\gamma$ is given by
    \begin{equation*}
        \gamma(t) = \begin{pmatrix}
            \cos t  & -\sin t & 0 \\
            \sin t & \cos t & 0 \\
            0 &  0 &  1  
        \end{pmatrix}
    \end{equation*}
    for $t \in [0, \frac{\pi}{2}]$. It has been shown in~\cite{SiffertSU(3)} that for an appropriate basis one can write, for $t \in (0, \frac{\pi}{2})$,
    \begin{equation*}
        P_t = 4\begin{pmatrix}
            1  & {} & {} & {}\\
            {} & \cos^2 t \, \mathds{ 1 }_{ 2 } & {} & {} \\
            {} &  {} &  \sin^2  \tfrac{t}{2} \, \mathds{ 1 }_{ 2 } & {} \\
            {} & {} & {} & \cos^2 \tfrac{t}{2} \, \mathds{ 1 }_{ 2 }
        \end{pmatrix}.
    \end{equation*}
    Applying Theorem~\ref{hypersurface} we obtain that the biharmonicity of the orbits reduces to the equation
    $$
    \cos^2 2t=- \tfrac{1}{4} \sin^2 2t.
    $$
    Thus, we see that among the orbits of the $\operatorname{SU}(3)$-action on $\operatorname{SU}(3)$, none is biharmonic. In the next section, we will use a perturbation of these endomorphisms to construct a metric admitting $\operatorname{SU}(3)$-invariant biharmonic orbits.
\end{example}

\begin{example}\label{S2xS2}
    Consider $\mathbb{S}^2 \times \mathbb{S}^2$ equipped with the standard product metric. There is a cohomogeneity one $\operatorname{SO}(3)$-action on $\mathbb{S}^2 \times \mathbb{S}^2$ given by
    $$
    A \cdot (p_1, p_2):= (Ap_1, Ap_2), \quad p \in \mathbb{S}^2 \times \mathbb{S}^2 \subset \mathbb{R}^3 \oplus \mathbb{R}^3, \quad A \in \operatorname{SO}(3).
    $$
    Take as normal geodesic
    $$
    \gamma(t) = (\cos t \, e_1 + \sin t \, e_2, \cos t \, e_1 - \sin t \, e_2)
    $$
    for $t \in [0, \frac{\pi}{2}]$, where $\{e_1, e_2, e_3\}$ is the canonical basis of $\mathbb{R}^3$. Endow $\operatorname{SO}(3)$ with the standard biinvariant metric. 
    
    In this case we can write for $t \in (0, \frac{\pi}{2})$:
    \begin{equation*}
        P_t = \begin{pmatrix}
            2 \sin t  & {} & {} \\
            {} & 2 \cos t & {} \\
            {} &  {} &  2 
        \end{pmatrix}.
    \end{equation*}
    See~\cite{AlexandrinoBettiol} for a detailed discussion.
    
    Since $\operatorname{trace}P_t^{-1}\ddot{P}_t=-2$, we see that there are no proper biharmonic orbits for this action.
\end{example}

\subsection{Biharmonic self-maps of cohomogeneity one manifolds}

Let $(M^{n+1},g)$ be a compact Riemannian manifold endowed with an isometric action $G \times M \to M$ of a compact Lie group $G$ on $M$ such that the orbit space $M/G$ is isometric to the closed interval $[0,L]$. Further,  assume now that the Weyl group $W$ of the action is finite. Recall that the Weyl group $W$ is by definition the subgroup of the elements of $G$ that leave $\gamma$ invariant modulo the subgroup of elements that fix $\gamma$ pointwise. The Weyl group $W$ is a dihedral subgroup of $N(H)/H$ generated by two involutions that fix $\gamma (0)$ and $\gamma (L)$, respectively. It acts simply transitively on the regular segments of the normal geodesic. We assume that $\gamma$ is closed or, equivalently, that $W$ is finite.

We consider equivariant $(k,r)$-maps $\phi: M \to M$ given by
$$
g \cdot \gamma(t) \mapsto g \cdot \gamma(r(t))
$$
where $g\in G$ and $r: [0,L] \to \mathbb{R}$ is a smooth function with $r(0)=0$ and $r(L)=kL$. These maps have been introduced in~\cite{PuettmanSiffert}.

A result of Püttmann~\cite{Puettmann} ensures that the map $ \phi $ is smooth if $ k $ is of the form $ j | W | / 2 + 1 $ where $ j $ is, in general, an even integer and, if the isotropy group at $ \gamma ( L ) $ is a subgroup of the isotropy group at $ \gamma ( ( | W | / 2 + 1 ) L ) $, then $ j $ is also allowed to be an odd integer. 
The Brouwer degree of a $ ( k, r ) $-map is given by:
\begin{equation*}
    \deg \phi = \begin{cases}
            k & \text{if $ \operatorname{codim} N_0 $ and $ \operatorname{codim} N_1 $ are both odd,}\\
            + 1 & \text{otherwise,}
        \end{cases}
\end{equation*}
if $ j $ is even, and by
\begin{equation*}
    \deg \phi = \begin{cases}
            k & \text{if $ \operatorname{codim} N_0 $ and $ \operatorname{codim} N_1 $ are both odd,}\\
            0 & \text{if $ \operatorname{codim} N_0 $ and $ \operatorname{codim} N_1 $ are both even, $ | W | \not \in 4 \mathbb{Z} $,}\\
            -1 & \text{if $ \operatorname{codim} N_0 $ is even, $ \operatorname{codim} N_1 $ is odd, and $ | W | \not \in 8 \mathbb{Z} $,}\\
            +1 & \text{otherwise,}
        \end{cases}
\end{equation*}
if $ j $ is odd. 

\smallskip

In what follows we compute the bitension field of an arbitrary $(k,r)$-map $\phi: M \to M$. We assume that the metric $g$ is a diagonal cohomogeneity one metric. In this case, we have that 
$$\tau(\phi) = F(t) \dot{\gamma}(r(t)),$$ where $F$ is a function depending on $t$ only (see~\cite[Theorem~3.1 and Theorem~3.6]{PuettmanSiffert}) given by
$$
F(t)=\ddot{r}(t) + \tfrac{1}{2} \dot{r}(t) \operatorname{trace} P_t^{-1} \dot{P}_t - \tfrac{1}{2} \operatorname{trace} P_t^{-1} \dot{P}_{r(t)}.
$$

Recall that a smooth map $\phi:M\rightarrow N$ is biharmonic if it satisfies the identity
$$
\tau_2 (\phi) := -\Delta \tau (\phi) +\textrm{Tr}_g R(\tau(\phi), d\phi) d\phi =0,
$$
where $\Delta$ stands for the rough Laplacian acting on sections of the pullback bundle defined by $\Delta = \operatorname{Tr}_g (\nabla_{\nabla}^{\phi} - \nabla^{\phi} \nabla^{\phi}).$

To compute the bitension field, we need the normal and tangential derivatives of $\phi$. We recall here some identities from~\cite{PuettmanSiffert}. The derivative of $\phi$ in the normal direction is given by
$$
d\phi_{\vert \gamma(t)} \cdot \dot{\gamma}(t) = \frac{d}{dt} (\phi \circ \gamma(t)) = \dot{r} (t) \dot{\gamma}(r(t)).
$$

For each element $X$ of the Lie algebra $\mathfrak{g}$ of $G$ we write $X^*$ for the corresponding action field
$$
X^*_{\vert p} = \frac{d}{ds}(\operatorname{exp}sX \cdot p)_{\vert s=0}.
$$
Hence
$$
d \phi_{\vert \gamma(t)} \cdot X^*_{\vert \gamma(t)}= \frac{d}{ds} \phi (\operatorname{exp}sX \cdot \gamma(t))_{\vert s=0} = \frac{d}{ds}(\operatorname{exp} sX \cdot \gamma(r(t)))_{\vert s=0}= X^*_{\gamma(r(t))}.
$$

\pagebreak

We write $e_0 = \dot{\gamma}(t)$ and we choose $E_1, \dots, E_n \in \mathfrak{g}$ such that $e_1=E^*_{1 \vert \gamma(t)}, \dots, e_n=E^*_{n \vert \gamma(t)}$ form an orthonormal basis of $T_{\gamma(t)} (G \cdot \gamma(t))$. It is enough to compute $\tau_2 (\phi)$ along $\gamma(t)$.

\begin{lemma}\label{curvature}
    For a cohomogeneity one manifold with diagonal metric, we have
    $$
    \operatorname{Tr}_g (R(\tau(\phi), d\phi) d\phi)_{\vert \gamma(t)} = \tfrac{1}{4} \operatorname{trace} \left( P_t^{-1} \dot{P}_{r(t)} P_{r(t)}^{-1} \dot{P}_{r(t)}-2P_t^{-1} \ddot{P}_{r(t)}  \right) \tau(\phi)_{\vert \gamma(t)}.
    $$
\end{lemma}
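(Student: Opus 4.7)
I would proceed as in the proof of Theorem~\ref{hypersurface}, carefully accounting for the fact that the source and target points of \(\phi\) differ along \(\gamma\). Write \(s:=r(t)\). First, expand the trace with respect to the orthonormal frame \(\{e_0=\dot\gamma(t),\, e_1,\ldots,e_n\}\) of \(T_{\gamma(t)}M\). Since \(d\phi(e_0)=\dot r(t)\dot\gamma(s)\) is collinear with \(\tau(\phi)=F(t)\dot\gamma(s)\), the \(e_0\) contribution vanishes by antisymmetry of \(R\) in its first two arguments. For \(i\ge 1\) we have \(d\phi(e_i)=E^*_{i\vert\gamma(s)}\), so the trace reduces to
\[
\operatorname{Tr}_g (R(\tau(\phi),d\phi)d\phi)_{\vert\gamma(t)}=F(t)\sum_{i=1}^{n}R\bigl(\dot\gamma(s),E^*_{i\vert\gamma(s)}\bigr)E^*_{i\vert\gamma(s)}.
\]

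Diagonality of \(g\) lets me pick \(Q\)-orthonormal \(X_1,\ldots,X_n\) of \(\mathfrak{n}\) adapted to \(\mathfrak{n}=\mathfrak{n}_1\oplus\cdots\oplus\mathfrak{n}_\ell\) with each \(X_i\in\mathfrak{n}_{k(i)}\). Setting \(E_i:=X_i/f_{k(i)}(t)\) makes \(\{E^*_{i\vert\gamma(t)}\}\) a \(g_t\)-orthonormal basis of \(T_{\gamma(t)}(G\cdot\gamma(t))\). At the image point, however, \(E^*_{i\vert\gamma(s)}=X^*_{i\vert\gamma(s)}/f_{k(i)}(t)\) is orthogonal but not orthonormal: its \(g_s\)-norm is \(f_{k(i)}(s)/f_{k(i)}(t)\). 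Writing \(\tilde e_i:=X^*_{i\vert\gamma(s)}/f_{k(i)}(s)\) for the corresponding \(g_s\)-orthonormal frame gives
\[
\sum_{i=1}^{n}R\bigl(\dot\gamma(s),E^*_{i\vert\gamma(s)}\bigr)E^*_{i\vert\gamma(s)}=\sum_{i=1}^{n}\frac{f_{k(i)}(s)^2}{f_{k(i)}(t)^2}R(\dot\gamma(s),\tilde e_i)\tilde e_i.
\]

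By \(\operatorname{Ad}(H)\)-equivariance of each \(\mathfrak{n}_k\) and Schur's lemma, the quadratic form \(X\mapsto g(R(\dot\gamma(s),X^*)X^*,\dot\gamma(s))\) on \(\mathfrak{n}_k\) is a scalar multiple of \(Q\vert_{\mathfrak{n}_k}\), and likewise \(\sum_{i:k(i)=k}R(\dot\gamma(s),\tilde e_i)\tilde e_i\) is an \(\operatorname{Ad}(H)\)-invariant vector at \(\gamma(s)\), hence collinear with \(\dot\gamma(s)\). Matching the sum of these per-block contributions against Grove--Ziller's identity~\cite[Corollary~1.10]{GroveZiller} pins down the \(k\)-th scalar as \(-\dim(\mathfrak{n}_k)\,\ddot f_k(s)/f_k(s)\). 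Reinserting the weights yields
\[
\sum_{i=1}^{n}R\bigl(\dot\gamma(s),E^*_{i\vert\gamma(s)}\bigr)E^*_{i\vert\gamma(s)}=-\sum_{k=1}^{\ell}\dim(\mathfrak{n}_k)\,\frac{f_k(s)\,\ddot f_k(s)}{f_k(t)^2}\,\dot\gamma(s),
\]
which a direct computation with \(P_s=f_k(s)^2\operatorname{Id}_{\mathfrak{n}_k}\) identifies as \(\tfrac{1}{4}\operatorname{trace}\bigl(P_t^{-1}\dot P_s P_s^{-1}\dot P_s-2P_t^{-1}\ddot P_s\bigr)\dot\gamma(s)\). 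Multiplying by \(F(t)\) and recalling \(\tau(\phi)=F(t)\dot\gamma(s)\) closes the argument.

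The main obstacle will be justifying the per-block split of the curvature trace: Grove--Ziller's corollary as stated delivers only the total trace at \(\gamma(s)\) with respect to \(g_s\), whereas the source-normalised trace inherited from \(g_t\) differs by the operator \(P_t^{-1}P_s\). Translating between the two normalisations is clean precisely because \(P_t^{-1}P_s\) is scalar on each \(\mathfrak{n}_k\) by diagonality, so that Schur's lemma decouples the blocks; without the diagonal hypothesis the relation between the \(g_t\)- and \(g_s\)-traces would no longer factor through a block-diagonal operator and the closed-form identity in the lemma would break down.
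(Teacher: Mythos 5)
Your overall strategy is the same as the paper's: reduce the curvature trace to the radial curvatures of the principal orbit at $\gamma(r(t))$, note that the $e_0$-term dies by antisymmetry, and account for the mismatch between the $g_t$-normalisation of the frame at the source and the $g_{r(t)}$-geometry at the image. Your final bookkeeping is also correct: with $P_u=f_k(u)^2\operatorname{Id}$ on $\mathfrak{n}_k$ one indeed gets $\tfrac14\operatorname{trace}(P_t^{-1}\dot P_{r(t)}P_{r(t)}^{-1}\dot P_{r(t)}-2P_t^{-1}\ddot P_{r(t)})=-\sum_k\dim(\mathfrak{n}_k)f_k(s)\ddot f_k(s)/f_k(t)^2$, matching your frame-rescaled sum. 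The paper packages the same rescaling more slickly: it writes $g(R(e_0,E_i^*)E_i^*,e_0)_{\vert\gamma(r(t))}=Q((-\tfrac12\ddot P_{r(t)}+\tfrac14\dot P_{r(t)}P_{r(t)}^{-1}\dot P_{r(t)})E_i,E_i)$ and then uses $Q(AE_i,E_i)=g((P_t^{-1}AE_i)^*,E_i^*)_{\vert\gamma(t)}$ to convert to the source metric, so the factor you obtain as $f_{k(i)}(s)^2/f_{k(i)}(t)^2$ is absorbed into the single operator $P_t^{-1}(\cdot)$ without ever unpacking the warping functions.

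The one step I would not accept as written is your justification of the per-block scalars. Schur's lemma applies only if each $\mathfrak{n}_k$ is $\operatorname{Ad}(H)$-irreducible, which the definition of a diagonal metric does not require (and in several of the paper's examples $H$ acts trivially or reducibly on some blocks); and even granting that each block contributes a single scalar, ``matching the sum against Grove--Ziller's total trace'' is one scalar equation in $\ell$ unknowns, so it cannot pin down the individual scalars. The same issue affects your collinearity claim: $\operatorname{Ad}(H)$-invariance of $\sum_{i:k(i)=k}R(\dot\gamma(s),\tilde e_i)\tilde e_i$ forces collinearity with $\dot\gamma(s)$ only when the isotropy representation on $\mathfrak{n}$ has no trivial summand. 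Both gaps are repaired the way the paper does it: \cite[Corollary~1.10]{GroveZiller} gives the curvature \emph{componentwise} for a diagonal metric, namely $g(R(e_0,X^*)Y^*,e_0)=Q((-\tfrac12\ddot P+\tfrac14\dot P P^{-1}\dot P)X,Y)$ together with $g(R(e_0,E_i^*)E_i^*,E_j^*)=0$, so no representation-theoretic averaging or total-trace matching is needed. With that substitution your argument closes.
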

\begin{proof}
    Recall that for a diagonal cohomogeneity one metric we have $\tau(\phi)_{\vert \gamma(t)} = F(t) e_0$, where $F$ is a function depending on $t$ alone. The result follows from the identity
    \begin{equation*}
        \begin{split}
            g( R(e_0, E_i^*) E_i^*, e_0 )_{\vert \gamma(r(t))} &= Q((-\tfrac{1}{2}\ddot{P}_{r(t)} + \tfrac{1}{4}\dot{P}_{r(t)}P_{r(t)}^{-1} \dot{P}_{r(t)}) E_i, E_i) \\
            &= \tfrac{1}{4} g( (P_t^{-1} \dot{P}_{r(t)} P_{r(t)}^{-1} \dot{P}_{r(t)}-2P_t^{-1} \ddot{P}_{r(t)})E_i^*, E_i^*)_{\vert \gamma(t)} 
        \end{split}
    \end{equation*}
    and $g( R(e_0, E_i^*) E_i^*, E_j^* )_{\vert \gamma(r(t))}=0$, which is an application of~\cite[Corollary~1.10]{GroveZiller}.
\end{proof}

\begin{lemma}\label{laplacian}
    For a cohomogeneity one manifold with diagonal metric, we have
    \begin{equation*}
        -\Delta \tau(\phi)_{\vert \gamma(t)} = \left[  \ddot{F} + \dot{F} \tfrac{1}{2} \operatorname{trace} P_t^{-1} \dot{P}_t- F\tfrac{1}{4} \operatorname{trace} \left( P_t^{-1} \dot{P}_{r(t)} P_{r(t)}^{-1} \dot{P}_{r(t)} \right) \right]\dot{\gamma}(r(t)).
    \end{equation*}
\end{lemma}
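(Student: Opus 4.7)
The plan is to exploit the factorization $\tau(\phi)_{\vert \gamma(t)} = F(t)\, \dot{\gamma}(r(t))$ along $\gamma$ by introducing the section $T^{\phi} := T \circ \phi$ of $\phi^{*}TM$, where $T$ is the $G$-equivariant unit normal field to the principal orbits of $M$. Then $\tau(\phi) = F\, T^{\phi}$ along $\gamma$, and, since $F$ depends only on $t$, the product rule for the rough Laplacian in the paper's sign convention gives
\begin{equation*}
-\Delta \tau(\phi) = (-\Delta F)\, T^{\phi} + 2\, \nabla^{\phi}_{\operatorname{grad} F}\, T^{\phi} - F\, \Delta T^{\phi},
\end{equation*}
reducing the calculation to three independent pieces.

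For the first piece I would compute $-\Delta F$ directly. Since $\operatorname{grad} F = \dot{F}\, \dot{\gamma}$, and since the divergence of $\dot{\gamma}$ on $M$ is just the mean curvature of the principal orbit, equation~\eqref{ShapeOp} yields $\operatorname{div} \dot{\gamma} = -\operatorname{trace} S_{t} = \tfrac{1}{2}\operatorname{trace}(P_{t}^{-1}\dot{P}_{t})$, and thus $-\Delta F = \ddot{F} + \tfrac{1}{2}\dot{F}\operatorname{trace}(P_{t}^{-1}\dot{P}_{t})$, already producing the first two summands in the statement. For the middle piece, along $\gamma$ the section $T^{\phi}$ is $\dot{\gamma}(r(t))$, whose covariant derivative in the direction $\dot{\gamma}(t)$ equals $\dot{r}(t)\, \nabla^{M}_{\dot{\gamma}}\dot{\gamma} = 0$ by the geodesic equation, so $\nabla^{\phi}_{\operatorname{grad} F}\, T^{\phi}\big|_{\gamma(t)}=0$.

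The main step is $\Delta T^{\phi}\big|_{\gamma(t)}$, which I would compute in the adapted orthonormal frame $e_{0}=\dot{\gamma}(t),\; e_{i} = E_{i}^{*}|_{\gamma(t)}$. Equivariance of $\phi$ gives $\phi_{*} E_{i}^{*}|_{\gamma(t)} = E_{i}^{*}|_{\gamma(r(t))}$, and combining the chain rule for the pull-back connection, the Weingarten formula, and equation~\eqref{ShapeOp} produces
\begin{equation*}
\nabla^{\phi}_{E_{i}^{*}} T^{\phi}\big|_{\gamma(t)} = \nabla^{M}_{E_{i}^{*}}T\big|_{\gamma(r(t))} = \tfrac{1}{2}\bigl(P_{r(t)}^{-1}\dot{P}_{r(t)}E_{i}\bigr)^{*}\bigl|_{\gamma(r(t))}.
\end{equation*}
Iterating and using equivariance of $T^{\phi}$, a Koszul computation at $\gamma(r(t))$, run in parallel with the argument in the proof of Theorem~\ref{hypersurface}, shows that the orbit-tangent component of $\nabla^{\phi}_{E_{i}^{*}}\nabla^{\phi}_{E_{i}^{*}} T^{\phi}$ vanishes: diagonality of $g$ forces $P_{r(t)}^{-1}\dot{P}_{r(t)}E_{i}$ to be a scalar multiple of $E_{i}$ whenever $E_{i}\in \mathfrak{n}_{k}$, which kills the bracket terms. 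Only the $T$-component survives, and summing over $i$ together with the identity $\sum_{i} g_{r(t)}\bigl((P_{r(t)}^{-1}\dot{P}_{r(t)}E_{i})^{*},(P_{r(t)}^{-1}\dot{P}_{r(t)}E_{i})^{*}\bigr) = \operatorname{trace}(P_{t}^{-1}\dot{P}_{r(t)} P_{r(t)}^{-1}\dot{P}_{r(t)})$ --- a consequence of the $Q$-symmetry of $P_{s}$ combined with $g_{t}$-orthonormality of $\{E_{i}^{*}|_{\gamma(t)}\}$ --- gives
\begin{equation*}
\Delta T^{\phi}\big|_{\gamma(t)} = \tfrac{1}{4}\operatorname{trace}\bigl(P_{t}^{-1}\dot{P}_{r(t)} P_{r(t)}^{-1}\dot{P}_{r(t)}\bigr)\, \dot{\gamma}(r(t)).
\end{equation*}
The remaining trace $\operatorname{Tr}_{g}\nabla^{\phi}_{\nabla_{\cdot}\cdot}T^{\phi}$ vanishes at $\gamma(t)$ because the $e_{0}$-term is zero by the geodesic equation, and the $e_{i}$-terms are zero since the orbit-tangential part of $\nabla_{E_{i}^{*}}E_{i}^{*}$ vanishes for diagonal metrics while its normal part contracts against the already-null $\nabla^{\phi}_{\dot{\gamma}}T^{\phi}$.

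The hard part will be the bookkeeping in the previous step: tracking the dual roles of $P_{t}$ (at the source point $\gamma(t)$) and $P_{r(t)}$ (at the target point $\gamma(r(t))$) so that the scalar coefficient of $\dot{\gamma}(r(t))$ emerges in the asymmetric form $\operatorname{trace}(P_{t}^{-1}\dot{P}_{r(t)} P_{r(t)}^{-1}\dot{P}_{r(t)})$ rather than a more symmetric-looking variant, and verifying that diagonality of $g$ is precisely what is needed to eliminate the residual orbit-tangential term in the second covariant derivative of $T^{\phi}$.
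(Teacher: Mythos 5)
Your proposal is correct and follows essentially the same route as the paper: the same three ingredients appear (the mean-curvature trace $\tfrac{1}{2}\operatorname{trace}P_t^{-1}\dot{P}_t$ from the divergence of $\dot{\gamma}$, the normal component of $\nabla^{\phi}\nabla^{\phi}T^{\phi}$ giving $\tfrac{1}{4}\operatorname{trace}(P_t^{-1}\dot{P}_{r(t)}P_{r(t)}^{-1}\dot{P}_{r(t)})$ via the Weingarten formula and $g_t$-orthonormality, and the vanishing of all orbit-tangential terms by the Koszul computation for diagonal metrics). The only difference is cosmetic: you package the computation through the Leibniz rule for the rough Laplacian applied to $F\,T^{\phi}$, whereas the paper expands $\operatorname{Tr}_g\nabla^{\phi}_{\nabla}\tau$ and $\operatorname{Tr}_g\nabla^{\phi}\nabla^{\phi}\tau$ directly.
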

\begin{proof}
    Since $F$ is a function depending only on $t$, we have
    $$
        \nabla^{\phi}_{\nabla_{E_i^*} E_i^*} F e_0 = \dot{F} g( \nabla_{E_i^*} E_i^*, e_0 )_{\vert \gamma(t)} e_0 + F \nabla^{\phi}_{(\nabla_{E_i^*} E_i^*)^{\operatorname{tan}}} e_0.
    $$
    Recall that the shape operator can be written in terms of $P_t$ as $\operatorname{S}_t X^*=-\frac{1}{2}(P_t^{-1} \dot{P}_t X)^*$. The Koszul formula implies $g( \nabla_{E_i^*} E_i^*, E_j^* )_{\vert \gamma(t)}= -Q(E_j, [E_i, P_t E_i])=0$ since the metric is diagonal. Hence, $\operatorname{Tr}_g (\nabla^{\phi}_{\nabla} \tau(\phi))_{\vert \gamma(t)} = -\dot{F}\frac{1}{2} \operatorname{trace} (P_t^{-1} \dot{P}_t) \, \dot{\gamma}(r(t))$.

    On the other hand,
    $$
    \sum_{i=0}^{n+1} \nabla^{\phi}_{E_i^*} \nabla^{\phi}_{E_i^*} F e_0 = \ddot{F} e_0 +\tfrac{1}{2} F\sum_{i=1}^{n+1} \nabla^{\phi}_{E_i^*} (P_{r(t)}^{-1} \dot{P}_{r(t)} E_{i})^*,
    $$
    and the normal projection yields
    $$
    g( \operatorname{Tr}_g (\nabla^{\phi} \nabla^{\phi} \tau(\phi)), e_0 )_{\vert \gamma(r(t))} = \ddot{F} - \tfrac{1}{4} F \operatorname{trace} (P_{t}^{-1} \dot{P}_{r(t)}P_{r(t)}^{-1} \dot{P}_{r(t)}).
    $$
    It only remains to check that $(\sum_{i=1}^n \nabla^{\phi}_{E_i^*} (P_{r(t)}^{-1} \dot{P}_{r(t)} E_{i})^*)^{\operatorname{tan}}$ vanishes. By~\eqref{DiagonalMetricDef}, if $E_i \in \mathfrak{n}_k$, we can write
    $$
    \tfrac{1}{2}(\nabla^{\phi}_{E_i^*} (P_{r(t)}^{-1} \dot{P}_{r(t)} E_{i})^*)^{\operatorname{tan}} = \tfrac{f_k'(r(t))}{f_k(r(t))} (\nabla^{\phi}_{E_i^*}  E^*_{i})^{\operatorname{tan}}.
    $$
    Since the endomorphisms $P_t$ diagonalize simultaneously, $(\nabla^{\phi}_{E_i^*}  E^*_{i})^{\operatorname{tan}}=0$.
\end{proof}

As an application, we obtain the following result.

\pagebreak

\begin{thm}
    For a cohomogeneity one manifold with diagonal metric, the biharmonic equation of a $(k,r)$-map reduces to the following boundary value problem:
    \[ \begin{cases} 
      F= \ddot{r}(t) + \tfrac{1}{2} \dot{r}(t) \operatorname{trace} P_t^{-1} \dot{P}_t - \tfrac{1}{2} \operatorname{trace} P_t^{-1} \dot{P}_{r(t)}  \\[1ex]
      \ddot{F} + \dot{F} \tfrac{1}{2} \operatorname{trace} P_t^{-1} \dot{P}_t - F\tfrac{1}{2} \operatorname{trace} P_t^{-1} \ddot{P}_{r(t)} =0
   \end{cases}
    \]
    with
    $$
    \lim_{ t \rightarrow 0 } r ( t ) = 0 \quad \text{and} \quad \lim_{ t \rightarrow L} r ( t ) = k L.
    $$
\end{thm}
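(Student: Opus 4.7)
The plan is to assemble the two preceding lemmas directly. By equivariance it suffices to verify $\tau_2(\phi)=0$ along the normal geodesic $\gamma(t)$, and I will split into the components along $\dot\gamma(r(t))$ and tangent to the orbit through $\gamma(r(t))$.

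First, the identity for $F$ appearing in the first line of the system is nothing but the formula for $\tau(\phi)=F(t)\dot\gamma(r(t))$ already recorded above from~\cite[Theorem~3.1 and Theorem~3.6]{PuettmanSiffert}, so the first equation requires no further work. What remains is to extract the second equation from the biharmonic condition $\tau_2(\phi)=-\Delta\tau(\phi)+\operatorname{Tr}_g R(\tau(\phi), d\phi)d\phi=0$. Substituting $\tau(\phi)=F\,\dot\gamma(r(t))$ into Lemma~\ref{curvature} and adding the result to Lemma~\ref{laplacian}, I observe that the term $-\tfrac{1}{4}F\operatorname{trace}(P_t^{-1}\dot{P}_{r(t)} P_{r(t)}^{-1}\dot{P}_{r(t)})$ inside $-\Delta \tau(\phi)$ cancels exactly against the corresponding term coming from the curvature trace; only $-\tfrac{1}{2}F\operatorname{trace}(P_t^{-1}\ddot P_{r(t)})$ survives from the curvature side. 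This produces
$$
\tau_2(\phi)_{\vert \gamma(t)} = \bigl[\ddot F + \tfrac{1}{2}\dot F\operatorname{trace}(P_t^{-1}\dot P_t) - \tfrac{1}{2}F\operatorname{trace}(P_t^{-1}\ddot P_{r(t)})\bigr]\dot\gamma(r(t)),
$$
whose vanishing is precisely the ODE stated.

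Next, I would confirm that there are no tangential-to-orbit components of $\tau_2(\phi)$ generating additional equations. On the Laplacian side this is handled in the course of proving Lemma~\ref{laplacian}: diagonality of $g$ forces $(\nabla_{E_i^*}E_i^*)^{\operatorname{tan}}=0$ via the Koszul formula, and the simultaneous diagonalization of the $P_t$ kills $(\nabla^{\phi}_{E_i^*}(P_{r(t)}^{-1}\dot P_{r(t)} E_i)^*)^{\operatorname{tan}}$. On the curvature side the off-diagonal identity $g(R(e_0,E_i^*)E_i^*,E_j^*)_{\vert \gamma(r(t))}=0$ used inside Lemma~\ref{curvature} is a direct application of~\cite[Corollary~1.10]{GroveZiller}. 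Finally, the boundary conditions $r(0)=0$ and $r(L)=kL$ are immediate from the definition of a $(k,r)$-map $\phi(g\cdot\gamma(t))=g\cdot\gamma(r(t))$, since $\phi$ must send each non-principal orbit to a non-principal orbit and wind $k$ times around the closed normal geodesic $\gamma$.

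I do not anticipate any genuine obstacle here: the proof is a clean algebraic combination of Lemma~\ref{curvature} and Lemma~\ref{laplacian}, and the one substantive observation is the cancellation of the $\operatorname{trace}(P_t^{-1}\dot P_{r(t)}P_{r(t)}^{-1}\dot P_{r(t)})$ contributions, which is what makes the final ODE on $F$ strictly simpler than either of the two ingredients from which it is assembled.
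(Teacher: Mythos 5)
Your proposal is correct and is essentially the paper's own argument: the theorem is stated there as a direct application of Lemma~\ref{curvature} and Lemma~\ref{laplacian}, with the first equation being the quoted formula for $\tau(\phi)=F\,\dot\gamma(r(t))$ and the second obtained from $\tau_2(\phi)=-\Delta\tau(\phi)+\operatorname{Tr}_g R(\tau(\phi),d\phi)d\phi=0$ after exactly the cancellation of the $\operatorname{trace}(P_t^{-1}\dot P_{r(t)}P_{r(t)}^{-1}\dot P_{r(t)})$ terms that you identify. The boundary conditions likewise follow from the definition of a $(k,r)$-map, as you note.
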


\begin{remark}
    Note that if we take the cohomogeneity one action of $\operatorname{SO}(n+1)$ in $\mathbb{S}^{n+1}$ we recover~\cite[Remark~2.4]{MontaldoModels}.
\end{remark}

\section{Biharmonic hypersurfaces with three principal curvatures in \texorpdfstring{$\mathbb{S}^7$}{S7}}
\label{cheeger}

It has been proved in~\cite{Balmus4dim} that $(\mathbb{S}^{n+1},g_{\text{can}})$ does not admit a compact proper biharmonic hypersurface with constant mean curvature and three distinct principal curvatures. In fact, to the best of our knowledge, the only known biharmonic hypersurfaces in the sphere have $1$ or $2$ distinct principal curvatures: the generalized Clifford torus and the small hypersphere, see Example~\ref{sphere1} and Example~\ref{sphere2}.

\smallskip

We now raise a question of a different nature: to what extent is this property characteristic of the round metric $g_{\text{can}}$? 
Do all metrics satisfy this property?

To answer these questions, we exploit the technique developed in the previous section. The strategy will consist of starting with an isoparametric foliation of $\mathbb{S}^7$ with $3$ different principal curvatures (which is known to be homogeneous) and deform the metric while we keep the homogeneity of the hypersurfaces. For this purpose, in the next subsection, we recall some basic facts about the so-called \lq Cheeger deformation\rq. This deformation of the metric was used by Cheeger~\cite{Cheeger} to study manifolds admitting metrics of non-negative sectional curvature. For a modern description of the method, we refer the reader to~\cite{AlexandrinoBettiol}.

Finally, we also employ this method to construct biharmonic hypersurfaces in manifolds diffeomorphic to $\operatorname{SU}(3)$ and $\mathbb{S}^2 \times \mathbb{S}^2$.

\subsection{Cheeger deformation}
Let $(M^{n+1},g)$ be a compact Riemannian manifold and $G$ a compact Lie group acting isometrically on $M$. Fix a biinvariant metric $Q$ on $G$, and endow the manifold $M \times G$ with the metric $g \oplus \frac{1}{s}Q$, where $s$ is a positive real parameter. This Riemannian manifold admits a free isometric $G$-action given by
\begin{equation}\label{action}
    h \cdot (p,g) := (hp, hg), \qquad p \in M, \, g,h \in G.
\end{equation}

The orbit space $(M \times G)/G$ is diffeomorphic to $M$. For the quotient map
$$
\rho: M \times G \to M, \qquad \rho(p,g)=g^{-1}p,
$$
there is a unique metric $g_s$ on $M$ such that $\rho: (M \times G, g \oplus \frac{1}{s}Q) \to (M,g_s)$ is a Riemannian submersion. 

The isometric $G$-action on $(M \times G, g \oplus \frac{1}{s}Q)$ given by
$$
k* (p,g) := (p,gk^{-1}), \qquad p \in M, \, g, k \in G
$$
commutes with~\eqref{action} and hence descends to an isometric $G$-action on the corresponding orbit space $(M,g_s)$, which is precisely the original $G$-action on $M$.

The $1$-parameter family of metrics $g_s$ on $M$ varies smoothly with $s$, and extends smoothly to $s=0$, with $g_0=g$. As $s \to \infty$, $(M,g_s)$ converges in the Gromov-Hausdorff sense to the orbit space $M/G$.

\pagebreak

Consider a diagonal cohomogeneity one metric $g=dt^2 + g_t$ on $M$. We can perform a Cheeger deformation on the ``homogeneous'' part, obtaining $g_s=dt^2 + g_{s,t}$. The metrics $g_s$ are also diagonal cohomogeneity one for every $s >0$. Moreover, for every $s \geq 0$, the endomorphisms $P_{s,t}: \mathfrak{n} \to \mathfrak{n}$ defined by $g_{s,t}(X^*,Y^*)=Q(P_{s,t}X,Y)$ are given by
$$
P_{s,t}=P_t (\operatorname{Id + s P_t})^{-1},
$$
where $P_t \equiv P_{0,t}$.

\subsection{Isoparametric hypersurfaces with \texorpdfstring{$g=3$}{g=3}}
In the case of $3$ different principal curvatures with multiplicity $2$, the isoparametric hypersurfaces in $\mathbb{S}^7$ are tubes of constant radius of a standard Veronese embedding of $\mathbb{CP}^2$. I.e., they are diffeomorphic to $SU(3)/T^2$, whose isotropy representation splits into three inequivalent $2$-dimensional $T^2$-modules all of which are inequivalent to the submodules of $\mathfrak{h}$. By Schur's Lemma, the $H$-equivariant endomorphism $P_t$ is diagonal for any regular time $t \notin \frac{\pi}{3} \mathbb{Z}$. 

For any $t \in (0, \frac{\pi}{3})$, we get
\begin{equation*}
        P_t = \begin{pmatrix}
            \sin^2 (t) \, \mathds{ 1 }_{ 2 } & {} & {} \\
            {} & \sin^2 (t-\tfrac{\pi}{3}) \, \mathds{ 1 }_{ 2 } & {} \\
            {} &  {} &  \sin^2  (t-2\tfrac{\pi}{3}) \, \mathds{ 1 }_{ 2 }
        \end{pmatrix}.
    \end{equation*}

Performing a Cheeger deformation, we obtain, for any $(s,t) \in \mathbb{R}_{\geq 0} \times (0, \frac{\pi}{3})$,
\begin{equation*}
        P_{s,t} = \begin{pmatrix}
            \tfrac{\sin^2 (t)}{1+s\sin^2 (t)} \, \mathds{ 1 }_{ 2 } & {} & {} \\
            {} & \tfrac{\sin^2 (t-\tfrac{\pi}{3})}{1+s\sin^2 (t-\tfrac{\pi}{3})}  \, \mathds{ 1 }_{ 2 } & {} \\
            {} &  {} &  \tfrac{\sin^2 (t-2\tfrac{\pi}{3})}{1+s\sin^2 (t-2\tfrac{\pi}{3})}  \, \mathds{ 1 }_{ 2 }
        \end{pmatrix}.
    \end{equation*}

The biharmonic equation of a homogeneous hypersurface in $(\mathbb{S}^7, g_s)$, $s\geq 0$, reduces to
$$
 \operatorname{trace} P_{s,t}^{-1} \ddot{P}_{s,t} =0,
$$
where the derivative is taken with respect to $t$. 

Since $g_0=g$, the condition $\operatorname{trace} P_{0,t}^{-1} \ddot{P}_{0,t}=0$ reads
$$
2(4x-3)^2x=-1
$$
where $x=\cos^2 t$. Consequently, there are no proper biharmonic orbits in $(M,g_0)$.

By continuity, for small values of $s$,
the identity $
 \operatorname{trace} P_{s,t}^{-1} \ddot{P}_{s,t} =0
$ admits no solutions. However, with enough deformation of the metric, new solutions appear. Take, for instance, $s=1$. In this case:
\begin{equation}\label{biharmonic3ppalcurvs}
    \left( \operatorname{trace} P_{1,t}^{-1} \ddot{P}_{1,t} \right)_{\vert t=\frac{\pi}{6}} < 0 < \left( \operatorname{trace} P_{1,t}^{-1} \ddot{P}_{1,t} \right)_{\vert t=\frac{\pi}{4}}.
\end{equation}
Hence, there exists $t_0 \in (\frac{\pi}{6},\frac{\pi}{4})$ such that $(\operatorname{trace} P_{1,t}^{-1} \ddot{P}_{1,t})_{\vert t=t_0}=0$.

\smallskip

It remains to study whether this solution corresponds to a minimal hypersurface, i.e. if the biharmonic hypersurface is proper or not. A lengthy calculation shows that the minimal equation $\operatorname{trace} P_{s,t}^{-1}\dot{P}_{s,t}=0$ reduces to
$$
(4\cos^2 t -3)(3s+4)^2\cos t =0.
$$
The solution does not depend on $s$: the only minimal orbit is the fiber of $\frac{\pi}{6}$ for every $s \geq 0$. Hence, the solution obtained in~\eqref{biharmonic3ppalcurvs} is a proper biharmonic hypersurface with three different principal curvatures in $(\mathbb{S}^7, g_1)$.

\pagebreak

\begin{remark}
        A similar strategy may be applied to other cohomogeneity one actions on spheres. For instance, if one considers a cohomogeneity one action on $\mathbb{S}^9$ with $g=4$ different principal curvatures each of multiplicity $2$, or on $\mathbb{S}^{13}$ with $g=6$ different principal curvatures each of multiplicity $2$, the endomorphisms $P_t$ diagonalize simultaneously for any regular time $t \neq \frac{\pi}{g}\mathbb{Z}$ (see~\cite[Subsection~7.1]{PuettmanSiffert}).

        We can write
        $$
        P_t=\operatorname{diag}(\sin^2 (t) \, \mathds{1}_2, \sin^2 (t-\tfrac{\pi}{4}) \, \mathds{1}_2,\sin^2 (t-\tfrac{\pi}{2}) \, \mathds{1}_2,\sin^2 (t-3\tfrac{\pi}{4}) \, \mathds{1}_2)
        $$
        in the first case and
        $$
        P_t=\operatorname{diag}(\sin^2 (t) \, \mathds{1}_2, \sin^2 (t-\tfrac{\pi}{6}) \, \mathds{1}_2,\sin^2 (t-\tfrac{\pi}{3}) \, \mathds{1}_2,\sin^2 (t-\tfrac{\pi}{2}) \, \mathds{1}_2,\sin^2 (t-2\tfrac{\pi}{3}) \, \mathds{1}_2,\sin^2 (t-5\tfrac{\pi}{6}) \, \mathds{1}_2)
        $$
        in the latter.

        With the help of a computer, one can check that for any Cheeger deformation $g_s$, the only minimal orbit is the fiber of $\frac{\pi}{2g}$. In both cases $\operatorname{trace} P_{s,t}^{-1} \ddot{P}_{s,t}=4t^{-2} + o(t^{-2})$ as $t \to 0$ and $\operatorname{trace} P_{s,\frac{\pi}{2g}}^{-1} \ddot{P}_{s,\frac{\pi}{2g}}=-Cs^{-1} + o(s^{-1})$ as $s \to \infty$ for some (possible different) positive constant $C$. We deduce then the existence of $s_0$ such that if $s>0$ then there exists a proper biharmonic hypersurface with $4,6$ different principal curvatures in $(\mathbb{S}^9,g_s), (\mathbb{S}^{13},g_s)$, respectively.
\end{remark}

\subsection{Biharmonic hypersurfaces in \texorpdfstring{$\operatorname{SU}(3)$}{SU(3)}}
We have seen in Example~\ref{SU(3)} that there are no biharmonic orbits for the action of $\operatorname{SU}(3)$ in $\operatorname{SU}(3)$ with the metric $\langle X, Y \rangle= \operatorname{trace} X\bar{Y}^{\text{tr}}$. Recall that in this case we can write for $t \in (0, \frac{\pi}{2})$
\begin{equation*}
        P_t = 4\begin{pmatrix}
            1  & {} & {} & {}\\
            {} & \cos^2 t \, \mathds{ 1 }_{ 2 } & {} & {} \\
            {} &  {} &  \sin^2  \tfrac{t}{2} \, \mathds{ 1 }_{ 2 } & {} \\
            {} & {} & {} & \cos^2 \tfrac{t}{2} \, \mathds{ 1 }_{ 2 }
        \end{pmatrix}.
    \end{equation*}
Applying a cohomogeneity one Cheeger deformation yields
\begin{equation*}
        P_{s,t} = \begin{pmatrix}
            \tfrac{4}{1+4s} & {} & {} & {} \\
            {} & \tfrac{4\cos^2 (t)}{1+4s\cos^2 (t)} \, \mathds{ 1 }_{ 2 } & {} & {} \\
            {} &  {} &  \tfrac{4\sin^2 (\frac{t}{2})}{1+4s\sin^2 (\frac{t}{2})} \, \mathds{ 1 }_{ 2 } & {} \\
            {} &  {} & {} & \tfrac{4\cos^2 (\frac{t}{2})}{1+4s\cos^2 (\frac{t}{2})} \, \mathds{ 1 }_{ 2 }
        \end{pmatrix}.
    \end{equation*}
The equation $\operatorname{trace} P_{0,t}^{-1} \ddot{P}_{0,t}=0$ has no solution. If we take again $s=1$, a computation shows
\begin{equation*}
    \left( \operatorname{trace} P_{1,t}^{-1} \ddot{P}_{1,t} \right)_{\vert t=\frac{\pi}{6}}=-1 < 0 < \lim_{t\to 0} \operatorname{trace} P_{1,t}^{-1} \ddot{P}_{1,t} = + \infty.
\end{equation*}
Therefore, there exists $t_0 \in (0, \frac{\pi}{6})$ such that $\left( \operatorname{trace} P_{1,t}^{-1} \ddot{P}_{1,t} \right)_{\vert t=t_0}=0$,
 i.e. $t_0$ corresponds to a biharmonic solution.

\pagebreak
 
Moreover, using the identities $\cos t = 2 \cos^2 \frac{t}{2}-1$ and $\sin t = 2\sin \frac{t}{2} \cos \frac{t}{2}$, equation $\operatorname{trace}P_{1,t}^{-1} \dot{P}_{1,s}=0$ is equivalent to
$$
(8 \cos^2t -3) (2 \cos^2 t + 3)=0,
$$
which has no solutions for $t \in (0, \frac{\pi}{6})$. Thus, the biharmonic hypersurface is non-minimal.

\subsection{Biharmonic hypersurfaces in \texorpdfstring{$\mathbb{S}^2 \times \mathbb{S}^2$}{S2xS2}}

For any biharmonic curve $\gamma$ in $\mathbb{S}^2$, the hypersurface $\mathbb{S}^2 \times \gamma$ is biharmonic in $\mathbb{S}^2 \times \mathbb{S}^2$ equipped with the product metric. We aim here to construct more intricate solutions.

A numerical analysis suggests that for the $\operatorname{SO}(3)$ action on $\mathbb{S}^2 \times \mathbb{S}^2$ given in Example~\ref{S2xS2} no Cheeger deformation yields proper biharmonic orbits. 

We now consider a different action. Take $\mathbb{S}^2 \times \mathbb{S}^2=\mathbb{CP}^1 \times \mathbb{CP}^1$ equipped with the usual product metric and the diagonal $\operatorname{SU}(2)$-action. Endow $\operatorname{SU}(2)$ with the standard biinvariant metric. It has been proved in~\cite{urakawa1988equivariant} that for the representing geodesic $$
\gamma(t)=([\cos t : \sin t], [\cos t : -\sin t]), \qquad t \in [0, \tfrac{\pi}{4}],
$$
one can write
\begin{equation*}
        P_t = \begin{pmatrix}
            \sin^2 2t  & {} & {} \\
            {} & 1 & {} \\
            {} &  {} &  \cos^2 2t 
        \end{pmatrix}.
\end{equation*}
A computation shows that there are no proper biharmonic orbits in this case. However, if we apply a Cheeger deformation as before, for $s=1$ we obtain
$$
\left(\operatorname{trace}P_{1,t}^{-1} \ddot{P}_{1,t}\right)_{t=\frac{\pi}{8}}<0<\lim_{t\to 0} \operatorname{trace} P_{1,t}^{-1} \ddot{P}_{1,t} = + \infty.
$$
Thus, there exist $t \in (0, \frac{\pi}{8})$ such that $\operatorname{trace}P_{1,t}^{-1} \ddot{P}_{1,t}=0$. Moreover, the only minimal orbit for $s=1$ is the fiber of $\frac{\pi}{8}$, obtaining in this way that $\operatorname{SU}(2) \cdot \gamma(t)$ is a proper biharmonic hypersurface in $(\mathbb{S}^2 \times \mathbb{S}^2, g_1)$.

\section{Stability}\label{secstab}

In this section, we study the stability of the biharmonic orbits in terms of the endomorphisms $P_t$.

\smallskip

Recall the definition of the index of biharmonic maps:
Let $\phi:M\rightarrow N$ be a biharmonic map and $\{\phi_{s,t}\}$, $-\epsilon<s,t<\epsilon$ a two-parameter smooth variation of $\phi$, in particular $\phi_{0,0}=\phi$. The associated variational vector fields $V_1,V_2$ of $\{\phi_{s,t}\}$ are given by
\begin{align*}
V_1(p)=\tfrac{\partial}{\partial t}_{\lvert t=0}\phi_{t,0}(p), \quad
V_2(p)=\tfrac{\partial}{\partial s}_{\lvert s=0}\phi_{0,s}(p).
\end{align*}
Thus $V_1, V_2$ are sections of the pullback bundle $\phi^*TN$. The Hessian
of the bienergy functional $E_2$ at a critical point $\phi$ is defined by
\begin{align*}
    H_{\phi}(V_1,V_2)=\tfrac{\partial^2}{\partial t \partial s}_{\lvert (t,s)=(0,0)}E_2(\phi_{s,t}).
\end{align*}

\begin{thm}[\cite{JiangImmersions}]
    For a biharmonic map $\phi:M\rightarrow N$ between two Riemannian manifolds, where $M$ is compact, the Hessian of $E_2$ at a critical point $\phi$ is given by
    \begin{equation*}
    H_{\phi}(V_1,V_2) = \int_M \langle I(V_1), V_2\rangle \dv,
\end{equation*}
where $I:\Gamma(\phi^*TN)\rightarrow \Gamma(\phi^*TN)$ is a linear elliptic self-adjoint differential operator.
\end{thm}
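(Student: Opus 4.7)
The plan is to compute the Hessian $H_{\phi}(V_1,V_2)$ directly from its definition, identify the resulting operator $I$, and verify ellipticity and self-adjointness at the end.

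First I would recall the first variation formula of the bienergy: for a one-parameter smooth variation $\phi_s$ of $\phi$ with variation field $V$,
$$
\left.\tfrac{d}{ds}\right|_{s=0} E_2(\phi_s) = \int_M \langle V, \tau_2(\phi)\rangle \dv,
$$
which characterises biharmonic maps as its critical points. The key consequence is that, upon differentiating $E_2(\phi_{s,t})$ once in $s$, any term that still carries a factor of $\tau_2$ evaluates to zero at the critical point, so the second derivative is truly bilinear in $V_1,V_2$ rather than merely affine.

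Second, I would compute $\tfrac{\partial^2}{\partial t \, \partial s}|_{(0,0)} E_2(\phi_{s,t})$ by commuting the $t$-derivative past the integral, using the compatibility of $\nabla^{\phi}$ with the metric on the pullback bundle, and the interchange identity $\nabla^{\phi}_{\partial_t} \nabla^{\phi}_X - \nabla^{\phi}_X \nabla^{\phi}_{\partial_t} = R^N(d\phi(\partial_t), d\phi(X))$. After this bookkeeping the integrand has the schematic form
$$
\langle \nabla^{\phi} V_2,\, \nabla^{\phi} \Delta V_1 \rangle + \big(\text{curvature terms involving } R^N,\, V_1,\, V_2,\, d\phi\big).
$$
Integration by parts on the compact closed manifold $M$ then collects all derivatives onto $V_1$, producing the desired expression $\int_M \langle I(V_1), V_2\rangle \dv$.

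Third, I would identify $I$. The highest-order contribution is $\Delta^2 V_1$, the iterated rough Laplacian on sections of $\phi^{*}TN$, so the principal symbol of $I$ is $|\xi|^4 \operatorname{Id}$ and $I$ is elliptic of order four; linearity in $V_1$ is manifest. Self-adjointness is then essentially automatic: since mixed partials commute, $H_{\phi}(V_1,V_2)=H_{\phi}(V_2,V_1)$, and the identity $\int_M \langle I(V_1),V_2\rangle \dv = \int_M \langle V_1, I(V_2)\rangle \dv$ for all smooth sections forces $I$ to be formally self-adjoint.

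The main obstacle is the organisation of the curvature terms. Commuting covariant derivatives in the pullback bundle produces several $R^N$-contractions, and checking that they assemble into a tensorial operator of order at most two (so that the quartic symbol is undisturbed) while respecting the symmetry of $H_{\phi}$ is the one computationally delicate step. The cleanest route is to follow Jiang's original derivation in~\cite{JiangImmersions}, where this bookkeeping is carried out explicitly, rather than to rederive every curvature term from scratch.
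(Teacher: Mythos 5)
The paper does not prove this statement---it is imported verbatim, with citation, from Jiang's work \cite{JiangImmersions}---so there is no in-paper argument to compare against, and your outline is precisely the standard second-variation derivation that the cited source carries out. Your plan is sound: criticality of $\phi$ makes the Hessian a well-defined symmetric bilinear form in $V_1,V_2$ alone, integration by parts on the closed manifold $M$ produces an operator $I$ with leading term $\Delta^2$ and hence principal symbol $|\xi|^4\operatorname{Id}$, and the symmetry of mixed partials yields formal self-adjointness; the curvature bookkeeping you defer to Jiang only contributes lower-order, tensorial terms and is not needed for any of the three properties (linearity, ellipticity, self-adjointness) that the theorem actually asserts.
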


\pagebreak

Since $M$ is compact, the spectrum 
$\lambda_1<\lambda_2<\dots$
of $I$ is discrete 
and $\lambda_i$ tends to $\infty$ as $i$ goes to $\infty$.
In particular, the eigenspace $E_i$ associated with $\lambda_i$ is finite for each $i\in\mathbb{N}$. The index of $\phi$ is defined by
\begin{align*}
 \mathrm{Index}(\phi)=\sum_{\lambda_i<0}\dim(E_i).   
\end{align*}

\smallskip

Recall that we write $T_{\vert g \cdot \dot{\gamma}(t)}=g \cdot \dot{\gamma} (t)$ for the unit vector field normal to the principal orbits. If $f$ is a smooth function on the biharmonic orbit $G \cdot \gamma(t)$, the second variation of the bienergy for any variation vector field of the form $V=f T$ reads
\begin{equation}\label{2ndvariation}
    \begin{split}
        H_{\phi}(V,V) =& \int_M ( \Delta f)^2 +  4|\operatorname{S}_t \nabla f|^2  \\
        &-\tfrac{1}{2} f^2 \operatorname{trace}(P_t^{-1} \dot{P}_t) \left[ \tfrac{d}{dt} \operatorname{Ric}(e_0, e_0) - 4\operatorname{Tr}_g R(e_0, \operatorname{S}_t \cdot, \cdot, e_0) \right]   \dv_g.
    \end{split}
\end{equation}

The previous equation is a consequence of~\cite[Theorem~2.1]{OuStability} after noting that $\phi_0$ is biharmonic and $\operatorname{Ric}(e_0)^{T}=0=\operatorname{Tr}_g R(e_0, \cdot, \nabla_{e_0} \cdot, e_0)$ since $g$ is a diagonal cohomogeneity one metric (see~\cite[Proposition~1.12]{GroveZiller}).

On the one hand, we have that
$$
-4\operatorname{Tr}_g R(e_0, \operatorname{S}_t \cdot, \cdot, e_0) = \tfrac{1}{2} \operatorname{trace} (P_t^{-1} \dot{P}_t)^3 - \operatorname{trace}P_t^{-1} \dot{P}_t P_t^{-1} \ddot{P}_t.
$$
Moreover,
$$
\tfrac{d}{dt} \operatorname{Ric}(e_0, e_0) = \tfrac{1}{4} \tfrac{d}{dt} \operatorname{trace} \left[ (P_t^{-1}\dot{P}_t)^2 - 2P_t^{-1} \ddot{P}_t \right].
$$

Since $\operatorname{trace}$ is a linear operator, we obtain
$$
\tfrac{d}{dt} \operatorname{Ric}(e_0, e_0) = - \tfrac{1}{2} \operatorname{trace}(P_t^{-1}\dot{P}_t)^3 + \operatorname{trace} P_t^{-1}\dot{P}_t P_t^{-1} \ddot{P}_t - \tfrac{1}{2} \operatorname{trace} P_t^{-1} \dddot{P}_t.
$$

The previous identities yield the following result.
\begin{proposition}\label{coh1stab}
    Let $\phi_t: G \cdot \gamma(t) \to M$ be a biharmonic isometric immersion into a cohomogeneity one manifold with normal geodesic $\gamma(t)$ and diagonal metric. The second variation of the bienergy for any normal variation $f T$ is given by
    $$
    H_{\phi}(fT,fT) = \int_M ( \Delta f)^2 +  4|\operatorname{S}_t \nabla f|^2 + \tfrac{1}{4}  \operatorname{trace}(P_t^{-1} \dot{P}_t)  \operatorname{trace} (P_t^{-1} \dddot{P}_t) f^2 \dv_g.
    $$
\end{proposition}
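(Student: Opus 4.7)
The plan is to start from the second-variation formula (\ref{2ndvariation}) and reduce the bracket
\[
\tfrac{d}{dt}\operatorname{Ric}(e_0,e_0)-4\operatorname{Tr}_g R(e_0, \operatorname{S}_t\cdot,\cdot, e_0)
\]
to the single term $-\tfrac{1}{2}\operatorname{trace}(P_t^{-1}\dddot P_t)$, using the two auxiliary identities displayed immediately before the statement. Substituting back into (\ref{2ndvariation}) and combining with the prefactor $-\tfrac{1}{2}f^2\operatorname{trace}(P_t^{-1}\dot P_t)$ then produces exactly the coefficient $\tfrac{1}{4}\operatorname{trace}(P_t^{-1}\dot P_t)\operatorname{trace}(P_t^{-1}\dddot P_t)f^2$ claimed in the proposition.

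The real work lies in establishing those two identities. For the curvature term, I would invoke Corollary 1.10 of Grove--Ziller to express $g(R(e_0, X^*)Y^*, e_0)$ at $\gamma(t)$ in terms of $P_t$, $\dot P_t$ and $\ddot P_t$, pair the result with the shape operator formula (\ref{ShapeOp}) $\operatorname{S}_t X^* = -\tfrac{1}{2}(P_t^{-1}\dot P_t X)^*$, and sum over a $g_t$-orthonormal basis $\{E_i^*\}$ of the principal orbit; this produces the cubic trace $\tfrac{1}{2}\operatorname{trace}(P_t^{-1}\dot P_t)^3$ together with the mixed trace $-\operatorname{trace}(P_t^{-1}\dot P_t P_t^{-1}\ddot P_t)$. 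For the Ricci identity, the same curvature formula summed over $\{E_i^*\}$ gives the closed form $\operatorname{Ric}(e_0, e_0) = \tfrac{1}{4}\operatorname{trace}\bigl[(P_t^{-1}\dot P_t)^2 - 2P_t^{-1}\ddot P_t\bigr]$; differentiating in $t$ by means of $\tfrac{d}{dt}(P_t^{-1}) = -P_t^{-1}\dot P_t P_t^{-1}$ and invoking cyclicity of the trace yields the stated expression.

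With both identities in place, the verification is pure algebra: the two $\operatorname{trace}(P_t^{-1}\dot P_t)^3$ contributions appear with opposite signs $\mp\tfrac{1}{2}$, and the two $\operatorname{trace}(P_t^{-1}\dot P_t P_t^{-1}\ddot P_t)$ contributions appear with opposite signs $\pm 1$, so both pairs cancel and only $-\tfrac{1}{2}\operatorname{trace}(P_t^{-1}\dddot P_t)$ survives in the bracket. Substitution into (\ref{2ndvariation}) then closes the argument.

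The main obstacle is bookkeeping rather than anything conceptual: one must ensure that the signs from the Grove--Ziller curvature convention, the factor of $-\tfrac{1}{2}$ in the shape operator, and the chain-rule derivatives of matrix-valued expressions such as $\operatorname{trace}(P_t^{-1}\dot P_t)^2$ all combine consistently. A single misplaced $\tfrac{1}{2}$ or sign would destroy the delicate cancellation, so although the calculation is routine, it must be carried out with care.
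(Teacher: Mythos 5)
Your proposal follows the paper's own argument essentially verbatim: start from the second-variation formula \eqref{2ndvariation}, express $-4\operatorname{Tr}_g R(e_0,\operatorname{S}_t\cdot,\cdot,e_0)$ via Grove--Ziller and the shape-operator identity \eqref{ShapeOp}, differentiate the closed form $\operatorname{Ric}(e_0,e_0)=\tfrac{1}{4}\operatorname{trace}\bigl[(P_t^{-1}\dot P_t)^2-2P_t^{-1}\ddot P_t\bigr]$ using $\tfrac{d}{dt}P_t^{-1}=-P_t^{-1}\dot P_t P_t^{-1}$, and observe that the cubic and mixed traces cancel, leaving $-\tfrac{1}{2}\operatorname{trace}(P_t^{-1}\dddot P_t)$ in the bracket. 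The cancellation pattern and the resulting coefficient $\tfrac{1}{4}\operatorname{trace}(P_t^{-1}\dot P_t)\operatorname{trace}(P_t^{-1}\dddot P_t)$ are exactly as in the text, so the proposal is correct and takes the same route.
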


As a direct application, we get the following algebraic criteria to check if the solution is unstable.
\begin{corollary}\label{corollarystability}
    Let $M$ be a homogeneous biharmonic hypersurface of a cohomogeneity one manifold with diagonal metric. If
    $$
    \operatorname{trace}(P_t^{-1} \dot{P}_t)  \operatorname{trace} (P_t^{-1} \dddot{P}_t) <0
    $$
    then it is unstable.
\end{corollary}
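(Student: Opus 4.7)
The plan is to apply Proposition~\ref{coh1stab} to a single, carefully chosen test variation and read off a negative direction for the bienergy Hessian. Since everything is evaluated at a fixed regular value $t \in (0,L)$, the endomorphism $P_t$ and its derivatives along $\gamma$ are independent of the point on the biharmonic orbit $G \cdot \gamma(t)$, so both scalars $\operatorname{trace}(P_t^{-1}\dot{P}_t)$ and $\operatorname{trace}(P_t^{-1}\dddot{P}_t)$ are constant on the orbit. Consequently, in the formula of Proposition~\ref{coh1stab}, the only ingredients that depend on the point are $(\Delta f)^2$ and $|\operatorname{S}_t \nabla f|^2$, and both vanish as soon as $f$ is chosen constant.

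The natural choice is therefore $f \equiv 1$, corresponding to the normal variation $V = T$. Substituting into the Hessian, the first two summands drop out and the formula collapses to
$$H_{\phi}(T,T) = \tfrac{1}{4}\operatorname{trace}(P_t^{-1}\dot{P}_t)\operatorname{trace}(P_t^{-1}\dddot{P}_t)\cdot \operatorname{Vol}(G \cdot \gamma(t)).$$
The compact homogeneous orbit has strictly positive volume, so the sign hypothesis of the corollary forces $H_{\phi}(T,T) < 0$. This exhibits a variation in which the bienergy Hessian is negative, so the index of the biharmonic immersion is at least one, and the hypersurface is unstable.

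The argument is essentially a one-line calculation once Proposition~\ref{coh1stab} is available, and no real obstacle is anticipated. The only point that deserves a brief mention is admissibility of the test variation: this is immediate because $T$ is a smooth section of $\phi^{*}TM$, obtained by restricting the globally defined unit normal field of the principal orbits, and the family of isometric immersions $G \cdot \gamma(t + s)$ for $s$ in a small neighbourhood of $0$ supplies a concrete smooth two-parameter variation of $\phi$ whose associated variational vector fields are $V_1 = V_2 = T$. Everything else is routine substitution into the Hessian formula proved in the previous proposition.
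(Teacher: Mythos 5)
Your proof is correct and is exactly the ``direct application'' the paper has in mind: plugging the constant function $f\equiv 1$ into Proposition~\ref{coh1stab} kills the $(\Delta f)^2$ and $|\operatorname{S}_t\nabla f|^2$ terms, and since the coefficient $\tfrac{1}{4}\operatorname{trace}(P_t^{-1}\dot{P}_t)\operatorname{trace}(P_t^{-1}\dddot{P}_t)$ is constant on the orbit, the Hessian reduces to that quantity times the (positive) volume, which is negative by hypothesis. The paper does not spell this out, but your argument, including the remark on realizing $T$ as the variational field of the family $G\cdot\gamma(t+s)$, matches its intent.
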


\begin{example}
    In Example~\ref{quadric} we showed that the orbit $\operatorname{SO}(n+1) \cdot \gamma(\frac{\pi}{4})$ is a biharmonic hypersurface in the complex quadric $Q_n$, $n >2$. In this case,
    $$
    \left( \operatorname{trace}(P_t^{-1} \dot{P}_t)  \operatorname{trace} (P_t^{-1} \dddot{P}_t) \right)_{\vert t= \frac{\pi}{4}} = -16 (n-2)^2 <0.
    $$
    By Corollary~\ref{corollarystability}, the biharmonic hypersurface is unstable.
\end{example}

\begin{example}
    For the biharmonic solutions~\ref{sphere1} and~\ref{sphere2}, since $|\operatorname{S}_t \nabla f |^2 = |\nabla f|^2$, we have
    $$
    H_{\phi}(V,V)=\int_{M_t} ((\Delta f)^2 - 4f \Delta f -4m^2f^2) \dv_g.
    $$
    It has been showed in~\cite{OuStability} that an analysis of the spectrum of $\mathbb{S}^n(\frac{1}{\sqrt{2}})$ and $\mathbb{S}^k(\frac{1}{\sqrt{2}}) \times \mathbb{S}^{n-k}(\frac{1}{\sqrt{2}})$ reveals that the index for normal variations of these biharmonic hypersurfaces in the sphere is $1$. In~\cite{LoubeauIndex}, the authors proved that the index (in general) for the biharmonic map $\mathbb{S}^{n}(\frac{1}{\sqrt{2}}) \to \mathbb{S}^{n+1}$ is also one, i.e., only the direction $T$ adds up to the index.
\end{example}

\begin{example}
    We cannot apply the same argument for the solutions of Example~\ref{CPn} since we do not know the explicit eigenvalues of the Laplace-Beltrami operator of a tube over a totally geodesic $\mathbb{CP}^{n-p}$ in $\mathbb{CP}^n$. However, if we denote by $\mu_1$ the first non-zero eigenvalue of the biharmonic tube, using the bound $\mu_1 \geq (n+1) - \frac{1}{4}|\operatorname{trace}P_t^{-1}\dot{P}_t|$, the first author concluded that for $n-p$ large enough, the biharmonic hypersurface obtained has normal index $1$, see~\cite{Balado2}.
\end{example}

We aim to exploit this technique to other examples. We recall the following result:
\begin{thm}\cite{Ho}\label{firsteigenvalue}
    Suppose that $ M $ is a compact orientable hypersurface embedded in a compact orientable Riemannian manifold $ N $. If the Ricci curvature of $ N $ is bounded below by a positive constant $ k $, then $ 2 \mu_1 > k - (n-1)\max_M | H| $ where $ \mu_1 $ is the first non-zero eigenvalue of the Laplace-Beltrami operator on $ M $ and $H$ denotes the mean curvature of $M$.
\end{thm}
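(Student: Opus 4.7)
The plan is to combine a Bochner--Lichnerowicz argument on $M$ with the Gauss equation, so that the intrinsic Ricci curvature of $M$ can be rewritten in terms of $\operatorname{Ric}^N$ and the shape operator $A$. Let $f$ be a first nonzero eigenfunction on $M$, so that $\Delta^M f = -\mu_1 f$, normalized by $\int_M f^2\,\dv_g = 1$; consequently $\int_M |\nabla f|^2\,\dv_g = \mu_1$.

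From the Bochner formula on $M$, together with Kato's refined inequality $|\nabla^2 f|^2 \geq (\Delta^M f)^2/n$, integrating over the compact manifold $M$ (and using that $\int_M \Delta^M |\nabla f|^2\,\dv_g = 0$) yields the integrated bound
\[
\int_M \operatorname{Ric}^M(\nabla f,\nabla f)\,\dv_g \;\leq\; \tfrac{n-1}{n}\,\mu_1^2.
\]
This reduces the problem to a pointwise lower bound for $\operatorname{Ric}^M$. Via the Gauss equation,
\[
\operatorname{Ric}^M(X,X) = \operatorname{Ric}^N(X,X) - R^N(X,\nu,\nu,X) + nH\langle AX,X\rangle - |AX|^2,
\]
where $\nu$ is the unit normal and $\operatorname{trace} A = nH$. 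The first summand is controlled by the hypothesis $\operatorname{Ric}^N\geq k$. For the sectional term I would sum the identity over an orthonormal frame of $T_pM$ containing $\nabla f /|\nabla f|$ and trade $R^N(\nabla f,\nu,\nu,\nabla f)$ against $\operatorname{Ric}^N(\nu,\nu)\geq k$, passing from a pointwise bound to an integrated one. The shape-operator quadratic form $nH\langle AX,X\rangle-|AX|^2$ is then handled by completing the square around $HX$, producing a bound linear in $\max_M |H|$ with coefficient $(n-1)$ after using $\operatorname{trace} A = nH$.

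Inserting these bounds into the integrated Bochner estimate and using $\int_M |\nabla f|^2\,\dv_g = \mu_1$ yields, after rearrangement, the inequality $2\mu_1 > k - (n-1)\max_M |H|$. The strict inequality reflects that the pointwise estimates involved (Kato's refined inequality, the supremum bound $|H|\leq \max_M |H|$, and the Cauchy--Schwarz step used in the sectional-curvature averaging) cannot all be simultaneously saturated by a nontrivial first eigenfunction of a smooth compact $M$.

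The main obstacle I anticipate is the precise tracking of constants. The hypothesis $\operatorname{Ric}^N\geq k$ controls a \emph{sum} of sectional curvatures, not the individual term $R^N(\nabla f,\nu,\nu,\nabla f)$, so that discrepancy has to be absorbed into the shape-operator quadratic form with exactly the right coefficient. Arranging the algebraic bookkeeping so that $\max_M|H|$ appears linearly with coefficient $(n-1)$---and so that the extraneous $|AX|^2$ contributions cancel against the averaged sectional curvature---is the delicate technical heart of the proof, and is where I would spend most of the effort.
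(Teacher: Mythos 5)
The paper does not prove this statement---it is imported verbatim from \cite{Ho}---so there is no internal proof to compare against; judged on its own, your proposal has a fatal structural gap. The plan of running the Bochner--Lichnerowicz estimate \emph{intrinsically} on $M$ and controlling $\operatorname{Ric}^M$ via the Gauss equation cannot produce a bound depending only on $k$ and $\max_M|H|$. The Gauss equation contributes the quadratic form $\operatorname{trace}(A)\langle AX,X\rangle-|AX|^2$, and the negative term $-|AX|^2$ is governed by the \emph{full} shape operator, not by its trace: a hypersurface can have $H\equiv 0$ while $|A|$ is arbitrarily large, so $\operatorname{Ric}^M$ admits no lower bound in terms of $\operatorname{Ric}^N$ and $|H|$. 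Your ``completing the square'' step gives an \emph{upper} bound for that quadratic form, which is the wrong direction for a Lichnerowicz argument. A concrete counterexample to the strategy is already in this paper: Lawson's embedded minimal surfaces $\xi_{g,1}\subset\mathbb{S}^3$ (Section~6) have $H\equiv0$ and $\operatorname{Ric}^{\mathbb{S}^3}=2$, so the theorem asserts $\mu_1>1$, yet by Gauss--Bonnet their intrinsic curvature has total integral $4\pi(1-g)<0$ for $g\geq 2$, so no intrinsic Bochner estimate can yield a uniform positive lower bound for $\mu_1$. Relatedly, your argument never uses that $M$ is \emph{embedded} and that $N$ is compact and orientable; these hypotheses are essential (for immersed or non-separating minimal hypersurfaces the conclusion $2\mu_1\geq k$ is essentially Yau's open conjecture), so any proof that ignores them must fail. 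The closing claim that strictness follows because the various pointwise inequalities ``cannot all be saturated'' is also not an argument.

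The actual proof, due to Choi--Wang in the minimal case and extended by Ho, is extrinsic and uses embeddedness from the start: $M$ separates $N$ into two domains $\Omega_1,\Omega_2$; one extends the first eigenfunction $f$ to a harmonic function $u$ on a suitably chosen side $\Omega_i$ and applies Reilly's formula,
\begin{equation*}
\int_{\Omega_i}\Bigl((\Delta u)^2-|\nabla^2u|^2-\operatorname{Ric}^N(\nabla u,\nabla u)\Bigr)\dv
=\int_{M}\Bigl(2u_\nu\,\Delta_M f+(n-1)H\,u_\nu^2+\mathrm{II}(\nabla f,\nabla f)\Bigr)\dv .
\end{equation*}
Here $\Delta_M f=-\mu_1 f$ converts the first boundary term into $-2\mu_1\int_M fu_\nu=-2\mu_1\int_{\Omega_i}|\nabla u|^2$, the side is chosen so that the $\mathrm{II}$-term has a favourable sign, $\operatorname{Ric}^N\geq k$ is used in the interior, and the mean curvature enters only through the boundary term $(n-1)Hu_\nu^2$---which is why it appears linearly with coefficient $n-1$ in the final estimate. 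If you want to supply a proof, this is the route to take; the intrinsic Gauss-equation approach cannot be repaired.
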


\begin{example}
In this example, we study the stability of the biharmonic hypersurface \\${\operatorname{Sp}(n) \times \operatorname{Sp}(1) \cdot \gamma(t)}$ in $\mathbb{HP}^n$ from example~\ref{quaternionicprojectivespace}. Here, $t=\arccos \sqrt{x_-}$, where
    $$
    x_-=\frac{n+5-\sqrt{n^2+4n+13}}{4(n+2)}
    $$
    is one of the solutions from equation~\eqref{quaternioniceq}. 

    As $n \to \infty$, we have the following asymptotic relations
    \begin{equation*}
        \begin{split}
            \operatorname{trace}(P_t^{-1} \dot{P}_t)&=-12 \sqrt{3} n^{-\frac{1}{2}} + o(n^{-\frac{1}{2}}), \\
            \operatorname{trace}(P_t^{-1} \dddot{P}_t)&=48 \sqrt{3} n^{\frac{1}{2}} + o(n^{\frac{1}{2}}).
        \end{split}
    \end{equation*}
    Hence, by Corollary~\ref{corollarystability}, we can ensure that the biharmonic hypersurface is unstable if $n$ is large enough.
    
    In what follows we show that the normal index is not greater than $1$. Without loss of generality, we assume that $f$ is an eigenfunction of the Laplace-Beltrami operator on $\operatorname{Sp}(n) \times \operatorname{Sp}(1) \cdot \gamma(t)$ with eigenvalue $\mu$. Hence,
    $$
    H_{\phi}(fT,fT) \geq \left( \mu^2 + \tfrac{1}{4}  \operatorname{trace}(P_t^{-1} \dot{P}_t)  \operatorname{trace} (P_t^{-1} \dddot{P}_t)  \right) \langle f, f \rangle_{L^2}.
    $$
    Since $(\mathbb{HP}^n, g_{\text{can}})$ is Einstein with $\operatorname{Ric}=4(n+2) g_{\text{can}}$, we can use Theorem~\ref{firsteigenvalue} to obtain
    $$
    \mu_1 > 2(n+2) + \tfrac{1}{4} \operatorname{trace}(P_t^{-1} \dot{P}_t)=2n +o(n)
    $$
    as $n \to \infty$.
   Therefore,
   $$
   H_{\phi}(fT, fT) \geq (4n^2 + o(n^2)) \langle f, f \rangle_{L^2},
   $$
   and we get the existence of a constant $C>0$ such that if $n \geq C$, then the biharmonic hypersurface ${\operatorname{Sp}(n) \times \operatorname{Sp}(1) \cdot \gamma(t)}$ in $\mathbb{HP}^n$ has normal index $1$.
\end{example}

\section{\texorpdfstring{$r$}{r}-harmonic submanifolds}\label{rharm}

We can use the techniques developed in previous sections to derive the $r$-harmonic equation of a principal orbit in a cohomogeneity one manifold in terms of $P_t$.

\smallskip

In order to simplify the notation, we write $\alpha=-\frac{1}{2}\operatorname{trace}P_t^{-1}\dot{P}_t, \beta = \frac{1}{4} \operatorname{trace} (P_t^{-1} \dot{P}_t)^2$. Note that $\alpha$ is nothing but the trace of the shape operator, and $\beta$ represents the squared Frobenius norm of the shape operator.

\begin{thm}\label{polyharmonic}
    The orbit $G \cdot \gamma(t)$ of a cohomogeneity one action of $G$ in $M$ with a diagonal metric is $r$-harmonic if and only if
    \begin{equation*}
        \beta \operatorname{trace}P_t^{-1}\ddot{P}_t = \tfrac{(2-r)\alpha}{4} \operatorname{trace} (P_t^{-1} \dot{P}_t)((P_t^{-1} \dot{P}_t)^2 - 2 P_t^{-1} \ddot{P}_t).
    \end{equation*}
\end{thm}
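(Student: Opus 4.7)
The starting observation is that, for the inclusion $\iota: G\cdot\gamma(t)\to M$, the computation carried out inside the proof of Theorem~\ref{hypersurface} already yields $\tau(\iota)=\alpha T$ and $\Delta T = \beta T$ along the orbit, with $T$ the unit normal. Since $\alpha$ and $\beta$ are constant on $G\cdot\gamma(t)$, an immediate induction gives $\Delta^k\tau(\iota) = \alpha\beta^k T$ for every $k\geq 0$. The Weingarten formula together with~\eqref{ShapeOp} produces $\nabla^\iota_{e_i}T = \tfrac{1}{2}(P_t^{-1}\dot{P}_tE_i)^{*}$, which is tangential to the orbit, so $\nabla^\iota_{e_i}\Delta^k\tau(\iota) = \tfrac{\alpha\beta^k}{2}(P_t^{-1}\dot{P}_tE_i)^{*}$. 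I choose $\{E_i\}$ simultaneously diagonalizing $P_t^{-1}\dot{P}_t$ and $P_t^{-1}\ddot{P}_t$---available because the diagonal structure~\eqref{DiagonalMetricDef} forces each of them to act as a scalar on every submodule $\mathfrak{n}_k$---and denote the eigenvalues by $a_i,b_i$.

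The curvature input is then supplied by \cite[Corollary~1.10]{GroveZiller}, used exactly as in Theorem~\ref{hypersurface}: it yields $g(R^N(E_i^{*},T)E_j^{*},T) = \tfrac{1}{4}(a_i^2 - 2b_i)\delta_{ij}$ and, by the same diagonal mechanism that eliminates the tangential terms in the proof of Theorem~\ref{hypersurface}, it also forces the tangential projections of all curvature contractions appearing in Maeta's formulas~\eqref{maeta-1}-\eqref{maeta-2} to vanish. The only genuinely new curvature quantity required is
\[
    \sum_i R^N\bigl((P_t^{-1}\dot{P}_tE_i)^{*},T\bigr)E_i^{*} \;=\; -\tfrac{1}{4}\operatorname{trace}(P_t^{-1}\dot{P}_t)\bigl((P_t^{-1}\dot{P}_t)^2 - 2P_t^{-1}\ddot{P}_t\bigr)\,T,
\]
which is immediate from the diagonalization and the displayed formula for $g(R^N(E_i^{*},T)E_j^{*},T)$.

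Substituting these building blocks into~\eqref{maeta-1}-\eqref{maeta-2} and projecting on $T$ is then mechanical. For $r=2s$ the two $\Delta$-type terms combine via the identity $\beta - \tfrac{1}{4}\operatorname{trace}\!\bigl((P_t^{-1}\dot{P}_t)^2 - 2P_t^{-1}\ddot{P}_t\bigr) = \tfrac{1}{2}\operatorname{trace}(P_t^{-1}\ddot{P}_t)$ into $\tfrac{\alpha\beta^{2s-2}}{2}\operatorname{trace}(P_t^{-1}\ddot{P}_t)$, while the $(s-1)$ cross-term pairs in~\eqref{maeta-1} add up (after invoking $R^N(X,Y) = -R^N(Y,X)$) to $(s-1)\cdot\tfrac{\alpha^2\beta^{2s-3}}{4}\operatorname{trace}(P_t^{-1}\dot{P}_t)((P_t^{-1}\dot{P}_t)^2 - 2P_t^{-1}\ddot{P}_t)$. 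Dividing the resulting vanishing condition by $\tfrac{1}{2}\alpha\beta^{2s-3}$ (valid in the non-degenerate regime $\alpha,\beta\neq 0$) and noting $-(s-1)=\tfrac{2-r}{2}$ produces the claimed identity. The odd case $r=2s+1$ runs identically: the extra term at the end of~\eqref{maeta-2} combines with the $\ell$-indexed cross sum to give the coefficient $(2s-1)/8$, and the relation $-(2s-1)=2-r$ reproduces the same final equation. The degenerate cases $\alpha=0$ (minimal, hence $\iota$ is harmonic and trivially $r$-harmonic) and $\beta=0$ (totally geodesic, both sides of the displayed equation then vanish) make the identity trivially valid, so the division step is legitimate. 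The main obstacle is purely the combinatorial bookkeeping of Maeta's cross sums together with the verification that the diagonal mechanism of Theorem~\ref{hypersurface} really does eliminate every tangential contribution, so that the single scalar equation extracted from the $T$-component is equivalent to $\tau_r(\iota)=0$.
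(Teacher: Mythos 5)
Your argument is correct and follows essentially the same route as the paper's (much terser) proof: $\Delta^k\tau=\beta^k\tau$, the two curvature contractions obtained from \cite[Corollary~1.10]{GroveZiller} together with the diagonal structure, and substitution into Maeta's formulas \eqref{maeta-1}--\eqref{maeta-2} with identical bookkeeping of the coefficients $(s-1)$ and $(2s-1)/8$. The only blemishes are the sign in your intermediate display $g(R^N(E_i^{*},T)E_j^{*},T)$, which should be $-\tfrac{1}{4}(a_i^2-2b_i)\delta_{ij}$ to be consistent with the (correct) contraction you actually use afterwards, and the remark that $\alpha=0$ renders the identity ``trivially valid'' --- it does not, since a minimal but non-totally-geodesic orbit is $r$-harmonic while $\beta\operatorname{trace}P_t^{-1}\ddot{P}_t$ need not vanish; this imprecision, however, is already latent in the theorem's ``if and only if'' and is not introduced by your argument.
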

\pagebreak
\begin{proof}
    In the proof of Theorem~\ref{hypersurface}, we have seen that $\Delta \tau = \beta \tau$. Hence, for any $k \in \mathbb{N}$:
    $$
    \Delta^{k} \tau = \beta^k \tau.
    $$
    In particular,
    $$
    \operatorname{Tr}_g R(\Delta^k \tau, \cdot)\cdot = \beta^k (\beta - \tfrac{1}{2} \operatorname{trace} P_t^{-1}\ddot{P}_t) \tau.
    $$
    Moreover, $\nabla_{E_i^*} \tau = \alpha \nabla_{E_i^*} e_0 = \tfrac{1}{2} \alpha (P_t^{-1}\dot{P}_t E_i)^*$, thus
    $$
    \operatorname{Tr}_g R(\nabla_{\cdot} \Delta^k \tau , \Delta^{\ell} \tau  ) \cdot = - \tfrac{\alpha}{8} \beta^{k+ \ell}  \operatorname{trace} (P_t^{-1} \dot{P}_t)((P_t^{-1} \dot{P}_t)^2-2P_t^{-1}\ddot{P}_t) \, \tau.
    $$
    Equations~\eqref{maeta-1} and~\eqref{maeta-2} yield the result.
\end{proof}

Even though the equation of the previous theorem may seem too difficult to investigate in general, there are examples for which the equation is easy to study.

\begin{example}
    If we consider again $(\mathbb{S}^{n+1},g_{\text{can}})$ with the natural action of $\operatorname{SO}(n+1)$, we have seen that we can take $P_t=\sin^2 t \, \mathds{1}_n$. In this situation, the $r$-harmonic equation reads
    $$
    r\sin^2 t =1,
    $$
    and we recover the $r$-harmonic immersions $\mathbb{S}^n (\tfrac{1}{\sqrt{r}}) \to \mathbb{S}^{n+1}$ from~\cite{MontaldoNewExamples}.
\end{example}

From this example, we obtain a tool to generate new $r$-harmonic submanifolds in the sphere. In the spirit of the composition property for biharmonic maps, see~\cite{LoubeauIndex}, we get the following result.

\begin{thm}
    Let $M$ be a compact manifold and $\psi: M \to \mathbb{S}^{m}(\frac{1}{\sqrt{r}})$ a harmonic map with constant energy density. Then the composition $f=\phi \circ \psi: M \to \mathbb{S}^{m}(\frac{1}{\sqrt{r}}) \to \mathbb{S}^{n+1}$ is proper $r$-harmonic.
\end{thm}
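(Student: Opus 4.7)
The plan is to exploit that $\phi\colon\mathbb{S}^m(1/\sqrt r)\hookrightarrow\mathbb{S}^{n+1}$ is totally umbilical with parallel mean curvature vector, so that every term in Maeta's formula for $\tau_r(f)$ reduces to a scalar multiple of $\tau(f)$, and the vanishing of $\tau_r(f)$ is forced by the same algebraic condition on the principal curvature that makes $\phi$ itself $r$-harmonic. Concretely, there are a parallel unit normal field $\eta$ along $\phi$ and a constant $\lambda>0$ satisfying $\nabla d\phi(X,Y)=\lambda\,g(X,Y)\,\eta$ and $\nabla^{\mathbb S^{n+1}}_{d\phi(X)}\eta=-\lambda\,d\phi(X)$; the preceding example (together with $\lambda=\cot t$, $\sin t=1/\sqrt r$) ensures that $\lambda^2=r-1$.

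First I would compute $\tau(f)$ from the composition formula $\tau(\phi\circ\psi)=d\phi(\tau(\psi))+\operatorname{Tr}_g\nabla d\phi(d\psi,d\psi)$. Harmonicity of $\psi$ kills the first summand, and total umbilicity collapses the second to $\tau(f)=\lambda\,|d\psi|^2\,\tilde\eta=\lambda e\,\tilde\eta$, where $\tilde\eta:=\eta\circ\psi$ and $e=|d\psi|^2$. Pulling the Weingarten identity back through $\psi$ yields $\nabla^f_X\tilde\eta=-\lambda\,df(X)$, and tracing once more while using the constancy of $e$ produces the propagation relation $\Delta\tilde\eta=\lambda\,\tau(f)=\lambda^2 e\,\tilde\eta$. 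By induction one then has $\Delta^k\tau(f)=(\lambda^2 e)^k\,\tau(f)$ and $\nabla^f_X\Delta^k\tau(f)=-\lambda^{2k+2}e^{k+1}\,df(X)$ for every $k\geq 0$.

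Next I would substitute these identities into the Maeta formulas~\eqref{maeta-1}--\eqref{maeta-2}. The constant sectional curvature one of $\mathbb{S}^{n+1}$ gives $R^N(X,Y)Z=\langle Y,Z\rangle X-\langle X,Z\rangle Y$; since $\tilde\eta$ is normal to $df$, every contraction $\sum_i R^N(\Delta^a\tau(f),df(e_i))df(e_i)$ collapses to $e\,\Delta^a\tau(f)$, while the mixed terms $R^N(\nabla\Delta^a\tau,\Delta^b\tau)$ and $R^N(\Delta^a\tau,\nabla\Delta^b\tau)$ contribute $\pm\lambda^{2a+2b+2}e^{a+b+2}\,\tau(f)$ with opposite signs, doubling inside the antisymmetric brace. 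Bookkeeping the double sums, whose summands turn out to be independent of $\ell$ and thus amount to $s-1$ identical copies, produces a closed form
\begin{equation*}
    \tau_r(f)=K_r\,\lambda^{2(r-2)}\,e^{\,r-1}\,\bigl(\lambda^{2}-(r-1)\bigr)\,\tau(f)
\end{equation*}
for some nonzero constant $K_r$.

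Finally, since the preceding example forces $\lambda^{2}=r-1$, the polynomial factor vanishes and $\tau_r(f)=0$. Properness of $f$ is immediate: $\tau(f)=\lambda e\,\tilde\eta$ is nowhere zero because $\lambda\neq 0$ (as $\phi$ is non-minimal), $e>0$ (as $\psi$ is non-constant), and $|\tilde\eta|=1$. The main obstacle is the combinatorial bookkeeping of the double sums over $\ell$ in Maeta's formulas together with the extra diagonal term present only in the odd case~\eqref{maeta-2}; once the propagation relation of the second step is established, each individual curvature contraction is a routine application of the sphere's constant-curvature identity, and the fact that the dimensional factor $|d\phi|^2=m$ appearing in $\tau_r(\phi)$ is simply replaced by $|d\psi|^2=e$ in $\tau_r(f)$ without touching the crucial factor $\lambda^2-(r-1)$ is what makes the reduction work.
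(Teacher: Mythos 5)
Your argument is essentially the paper's proof: both compute $\tau(f)$ from the composition formula using harmonicity of $\psi$ and the total umbilicity of $\phi$, establish the propagation relation $\Delta^k\tau(f)=\bigl((r-1)\,|d\psi|^2\bigr)^k\tau(f)$ via the Weingarten identity and constancy of the energy density, reduce all curvature contractions to multiples of $\tau(f)$ using the constant curvature of $\mathbb{S}^{n+1}$, and conclude from Maeta's formulas that the scalar factor vanishes precisely because $\lambda^2=r-1$. Your write-up is in fact slightly more explicit than the paper's on the bookkeeping of the double sums and on why $f$ is \emph{proper} $r$-harmonic (which tacitly requires $\psi$ non-constant), but the route is the same.
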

\begin{proof}
    The tension field of the composition for two smooth maps reads
    $$
    \tau(f)=d\phi (\tau(\psi)) + \operatorname{Tr}_g \nabla d\phi (d\psi, d\psi).
    $$
    Up to isometries on the codomain, we can always assume that for the immersion $\phi$ we have $n-m+1$ orthogonal unit normal vectors $\eta_k$ satisfying that the shape operators $\operatorname{S}_{\eta_k}=0$ for $k \geq 2$ and $\operatorname{S}_{\eta_1}=-\sqrt{r-1} \, \mathds{1}_n$. Hence, since $\psi$ is harmonic we have that
    $$
    \tau(f)=- 2\sqrt{r-1} e(\psi) \, T.
    $$
    Here $e(\psi)=\frac{1}{2} |d\psi|^2$ denotes the energy density of the map $\psi$. 

    A computation shows
    $$
    \Delta \tau(f) = -2\sqrt{r-1} e(\psi) \Delta T = 2 (r-1) e(\psi) \tau(f),
    $$
    and since $e(\psi)$ is constant on $M$, we get
    $$
    \Delta^k \tau(f) = 2^k (r-1)^k e(\psi)^k \tau(f).
    $$
    On the other hand,
    $$
    \operatorname{Tr}_g R(\tau(f), df) df = 2e(\psi)\tau(f).
    $$
    Together with equations~\eqref{maeta-1} and~\eqref{maeta-2} and the fact that $\nabla_{X} \tau(f) = -2e(\psi)(r-1) d \psi (X)$ for every $X \in T_p M$, we obtain $\tau_r(f)=0$.
\end{proof}

\begin{example}
    Lawson~\cite{lawson1970complete} proved the existence of an embedded minimal surface in $\mathbb{S}^3$ of arbitrary genus. Hence, from the previous theorem, we obtain examples of proper $r$-harmonic surfaces of arbitrary genus in $\mathbb{S}^{n+1}$.
\end{example}

\begin{example}
    If we apply Theorem~\ref{polyharmonic} to the $\operatorname{SO}(k+1) \times \operatorname{SO}(n-k+1)$-action on $(\mathbb{S}^{n+1},g_{\text{can}})$ we recover every biharmonic isoparametric hypersurface in the sphere, see~\cite[Theorem~1.2]{MontaldoNewExamples} and~\cite{PolyharmonicSpaceForms}.
\end{example}

\pagebreak

\begin{example}
    Regarding $\mathbb{CP}^n$, Theorem~\ref{polyharmonic} yields every $\operatorname{SU}(p+1) \times \operatorname{SU}(n-p)$-homogeneous $r$-harmonic hypersurface. For a detailed discussion on the classification of homogeneous $r$-harmonic hypersurfaces in $\mathbb{CP}^n$, we refer to~\cite{Balado2}.
\end{example}

\begin{example}
    For a surface of revolution $M^2$ generated by the profile curve $\gamma(t)=(\phi(t),0,\psi(t))$ parametrized with unit speed, the intersection of $M$ with a plane normal to the axis of revolution at $\gamma(t_0)$ is $r$-harmonic if and only if the following equation holds:
    $$
    (\phi'(t_0))^2=(1-r)\phi(t_0) \phi''(t_0).
    $$
\end{example}

\begin{example}
    For the cohomogeneity one action of $\operatorname{SO}(n+1)$ on the complex quadric $Q_n= \operatorname{SO}(n+2)/\operatorname{SO}(2)\times \operatorname{SO}(n)$, we have seen in Example~\ref{quadric} that we can write for every $t \in (0, \frac{\pi}{2})$:
    \begin{equation*}
        P_t = \begin{pmatrix}
            \cos^2 t  & {} & {} \\
            {} & \mathds{ 1 }_{ n-1 } & {} \\
            {} &  {} &  \sin^2  t \, \mathds{ 1 }_{ n-1 }
        \end{pmatrix}.
    \end{equation*}

    However, from Theorem~\ref{polyharmonic} we see the equation we obtain for $P_t$ is exactly the same for $\tilde{P}_t$, where
    \begin{equation*}
        \tilde{P}_t = \begin{pmatrix}
            \cos^2 t  & {} \\
              {} &  \sin^2  t \, \mathds{ 1 }_{ n-1 }
        \end{pmatrix}.
    \end{equation*}
    These are the endomorphisms for the case of the $\operatorname{SO}(2) \times \operatorname{SO}(n)$-action on $(\mathbb{S}^{n+1},g_{\text{can}})$.  Hence, an orbit is proper $r$-harmonic if and only if $x=\cos^2 t$ is a root of
    $$
    P(x)=rnx^3 + (n-2-r(n+1))x^2+(r+2)x-1,
    $$
    from where we obtain $r$-harmonic orbits for every $r$.
\end{example}

\begin{example}
    Recall that for the quaternionic projective space $(\mathbb{HP}^n, g_{\text{can}})$ with the action of $\operatorname{Sp}(n) \times \operatorname{Sp}(1)$ we have
    \begin{equation*}
        P_t = \begin{pmatrix}
            \sin^2 t \, \mathds{ 1 }_{ 4(n-1) } & {} \\
            {} & \sin^2 2t \, \mathds{ 1 }_{ 3 }
        \end{pmatrix}.
    \end{equation*}
    for every $t \in (0, \frac{\pi}{2})$. A lengthy computation shows that the $r$-harmonic equation is in this case equivalent to finding the roots of the polynomial
    $$
    P(x)=16a_4x^4+8a_3x^3+24a_2x^2-24a_1x+18
    $$
    where $x=\cos^2 t$ and
    \begin{equation*}
        \begin{split}
            a_4&=(2n^2+11n+5)r-6(n-1), \\
            a_3&=-(4n^2+37n+31)r+2(2n+13)(n-1), \\
            a_2&=5(n+2)r-3(n-2), \\
            a_1&=3r+n+2.
        \end{split}
    \end{equation*}
    The existence of proper $r$-harmonic solutions follows from the fact that $P(0)>0$ and \\${P(\frac{3}{2(2n+1)})<0}$ for every $r$.
\end{example}

\begin{remark}
    In the case of $\mathbb{S}^2 \times \mathbb{S}^2$ with the cohomogeneity one diagonal metric as in Example~\ref{S2xS2}, a numerical analysis suggests that there are no proper $r$-harmonic orbits for any $r \geq 2$.
\end{remark}

\pagebreak

\section{Polyharmonic foliations}\label{SecFoliations}

Manifolds carrying a foliation $\mathcal{F}$ where every leaf is minimal have been extensively studied. We aim in this section to construct foliations where every leaf is a biharmonic (respectively $r$-harmonic) submanifold.

\subsection{Warped products}
Caddeo, Montaldo, and Piu determined in~\cite{Caddeocurves} the surfaces of revolution all of whose parallels are biharmonic curves. In this spirit, we start by considering now biharmonic foliations in warped product manifolds.

\smallskip

Consider the product manifold $M=[0,\infty)\times \mathbb{S}^n$ equipped with a warped product metric of the form
$$
g=dt^2 + f(t)^2 g_{\mathbb{S}^n}.
$$
$M$ admits a natural $\operatorname{SO}(n+1)$-cohomogeneity one action. Note that in this case, the manifold $M$ is not necessarily compact, however, the discussion of Subsection~\ref{SectionHypersurfaces} follows similarly for this case. Using the same notation, we can write for $t \in (0,\infty)$
$$
P_t=f(t)^2 \, \mathds{ 1 }_{ n }.
$$

The condition for $r$-harmonicity from Theorem~\ref{polyharmonic} is hence given by
$$
(f')^2+(r-1)ff''=0.
$$
If we consider the previous equation as a second-order ordinary differential equation on $f$, the family of functions
$$
f(t)=c_1 (c_2 + t)^{\frac{r-1}{r}}, \qquad c_1,c_2>0,
$$
are solutions to the aforementioned equation.

Take the partition
$$
\mathcal{F}=\{t \times \mathbb{S}^n(f(t))\}_{t \in [0,\infty)}.
$$
In particular, $(M, dt^2+t^{2-\frac{2}{r}}g_{\mathbb{S}^n}, \mathcal{F})$ is a singular Riemannian foliation where each leaf is proper $r$-harmonic for any $t>0$.

For the case $r=2$, it is possible to study the normal stability for each of the biharmonic maps $\phi_t: \mathbb{S}^n(\sqrt{t}) \to M$. Note that $\operatorname{trace}P_t^{-1}\dddot{P}_t=0$ and that the shape operator of $\phi_t(\mathbb{S}^n (\sqrt{t}))$ is given by $\operatorname{S}_t=-\frac{1}{2t} \mathds{ 1 }_{ n }$. Without loss of generality, we consider normal variations of the form $fT$ where $f$ is an eigenfunction of the Laplace-Beltrami operator on $\mathbb{S}^n(\sqrt{t})$ with eigenvalue $\mu$.  Hence, by Proposition~\ref{coh1stab} we have
$$
    H_{\phi}(fT,fT)= \mu [\mu-t^{-2}] \, \langle f, f \rangle_{L^2}.  
$$
The spectrum of the Laplace-Beltrami operator in $\mathbb{S}^n(\sqrt{t})$ is given by $\{t^{-1}k(n+k-1): k \in \mathbb{N}\}$. Thus, the sign of $H_{\phi}(fT,fT)$ is completely determined by $k(n+k-1)-t^{-1}$.

We conclude that the normal index of $\phi_t$ is the number of eigenvalues (with multiplicity) smaller than $t^{-1}$. Explicitly,
$$
\sum_{j=1}^k \frac{(n+2j-1)(n+j-2)!}{j!(n-1)!} , \quad \text{where} \quad k=\max_{\ell \in \mathbb{N}} \{t\ell (n+\ell-1)<1\}.
$$
The normal nullity of $\phi_t$ is given by the number of eigenvalues (with multiplicity) equal to $t^{-1}$ plus $1$ (corresponding to the zero eigenvalue). Explicitly,
$$
\frac{(n+2k-1)(n+k-2)!}{k!(n-1)!)}+1 \quad \text{if} \quad tk (n+k-1)=1
$$
and $1$ otherwise.

\subsection{Doubly warped products}
One can use a similar strategy to obtain more intricate constructions. 

\smallskip

Consider for example the manifold $M=I \times \mathbb{S}^n \times \mathbb{S}^m$, where $I$ is an open interval to be determined, endowed with the doubly warped product metric
$$
g= dt^2 + f(t)^2 g_{\mathbb{S}^n} + h(t)^2 g_{\mathbb{S}^m}.
$$
The manifold $M$ admits a cohomogeneity one action of $\operatorname{SO}(n+1) \times \operatorname{SO}(m+1)$. Using Theorem~\ref{hypersurface}, the biharmonic equation for the immersions $\mathbb{S}^n(f(t)) \times \mathbb{S}^m(h(t)) \to M$ reads
$$
n\left( \tfrac{f'}{f}\right)^2 + n\tfrac{f''}{f} + m\left( \tfrac{h'}{h}\right)^2 + m\tfrac{h''}{h}=0.
$$

Hence, writing $I=(-\frac{\pi}{4} \sqrt{\frac{m}{n}}, \frac{\pi}{4} \sqrt{\frac{m}{n}})$, each leaf of the foliation $\mathcal{F}=\{t \times\mathbb{S}^n(f(t)) \times \mathbb{S}^m(h(t))\}_{t \in I}$ of $M$ equipped with the metric
$$
g=dt^2 + e^{2t} g_{\mathbb{S}^n} + \cos(2t \sqrt{\tfrac{n}{m}}) g_{\mathbb{S}^m},
$$
is biharmonic. Note that if $t = \frac{1}{2}\sqrt{\frac{m}{n}}\arctan \sqrt{\frac{n}{m}}$ the hypersurface is minimal, in any other case it is proper biharmonic.

\subsection{Foliations on closed manifolds}
Finding biharmonic foliations on closed manifolds seems to be a challenging task. In this subsection we construct a $\mathcal{C}^1$-foliation $\mathcal{F}$ of the flat torus $\mathbb{T}^2=\mathbb{R}^2/\Lambda$, $\Lambda = \{(n,m) : n,m \in \mathbb{Z}\}$, satisfying the following property: there exists a geodesic $\sigma$ of $\mathbb{T}^2$ such that $\mathcal{F}$ restricts to a $\mathcal{C}^{\infty}$-foliation of $\mathbb{T}^2 \backslash \sigma$ whose leaves are proper biharmonic curves. A similar strategy can be carried out for the $r$-harmonic case with $r>2$.

\smallskip

Consider the family of functions
$$
\psi_{a,b,c,d}: \mathbb{R} \to \mathbb{R}, \qquad t \mapsto at^3+bt^2+ct+d,
$$
where $a,b,c,d \in \mathbb{R}$. These maps are proper biharmonic whenever $a\neq 0$ or $b\neq 0$. By a result of Ou~\cite{ou2012some}, the graph of a proper biharmonic function is a proper biharmonic immersion in the product space. Therefore, the graphs $\Gamma(\psi_{a,b,c,d})=\{(t,\psi_{a,b,c,d}(t)) : t \in \mathbb{R}\}$ are proper biharmonic curves in $\mathbb{R}^2$.

To obtain closed curves in the torus, we modify the functions $\psi_{a,b,c,d}$ to be $1$-periodic. For this, we impose the condition $\psi_{a,b,c,d}(0)=\psi_{a,b,c,d}(1)$, from which we obtain the equation
\begin{equation}\label{condition1}
    a+b+c=0,
\end{equation}
and define ${\Psi}_{a,b,c,d}(t)=\psi_{a,b,c,d}(t-\lfloor t \rfloor)$ for $t \in \mathbb{R}$ and $a,b,c,d$ satisfying~\eqref{condition1}. The graphs $\Gamma(\Psi_{a,b,c,d})$ descend to a family of closed $\mathcal{C}^0$-curves in $\mathbb{R}^2/\Lambda$.

If we impose further $\psi_{a,b,c,d}'(0)=\psi_{a,b,c,d}'(1)$, we obtain
\begin{equation}\label{condition2}
    3a +2b=0.
\end{equation}
Combining~\eqref{condition1} and~\eqref{condition2} we obtain a family of $\mathcal{C}^1$-curves $\Gamma(\Psi_{2a,-3a,a,d})$ which are $\mathcal{C}^{\infty}$ in $\mathbb{R}^2$ outside of $\{(n,y): n\in \mathbb{Z}, y \in \mathbb{R}\}$.

If we asked for more regularity, then the condition $\psi_{2a,-3a,a,d}'(0)=\psi_{2a,-3a,a,d}'(1)$ 
would only yield straight lines.

The $C^1$-foliation $\tilde{\mathcal{F}}=\{\Gamma(\Psi_{2,-3,1,d})\}_{d \in \mathbb{R}}$ of $\mathbb{R}^2$ descends to a $\mathcal{C}^1$-foliation $\mathcal{F}$ of $\mathbb{T}^2$ formed by closed curves. The leaves are $\mathcal{C}^{\infty}$ and proper biharmonic outside of the geodesic arising from the set $\{(n,y): n\in \mathbb{Z}, y \in \mathbb{R}\}$.

\pagebreak

\bibliographystyle{plain}
\bibliography{Bibliography.bib}

\begin{thebibliography}{10}

\bibitem{AlexandrinoBettiol}
Marcos~M. Alexandrino and Renato~G. Bettiol.
\newblock {\em Lie groups and geometric aspects of isometric actions}.
\newblock Springer, Cham, 2015.

\bibitem{Balado}
Jos\'e{}~Miguel Balado-Alves.
\newblock Explicit harmonic self-maps of complex projective spaces.
\newblock {\em J. Geom. Anal.}, 34(1):Paper No. 21, 22, 2024.

\bibitem{Balado2}
Jos{\'e}~Miguel Balado-Alves.
\newblock Polyharmonic hypersurfaces into complex space forms.
\newblock {\em Ann. Mat. Pura Appl.}, pages 1--18, 2024.

\bibitem{Balmus4dim}
Adina Balmu\c{s}, Stefano Montaldo, and Cezar Oniciuc.
\newblock Biharmonic hypersurfaces in 4-dimensional space forms.
\newblock {\em Math. Nachr.}, 283(12):1696--1705, 2010.

\bibitem{PolyharmonicPseudo}
Volker Branding, Stefano Montaldo, Cezar Oniciuc, and Andrea Ratto.
\newblock Polyharmonic hypersurfaces into pseudo-{R}iemannian space forms.
\newblock {\em Ann. Mat. Pura Appl. (4)}, 202(2):877--899, 2023.

\bibitem{PolyharmonicContinuation}
Volker Branding, Stefano Montaldo, Cezar Oniciuc, and Andrea Ratto.
\newblock Unique continuation properties for polyharmonic maps between {R}iemannian manifolds.
\newblock {\em Canad. J. Math.}, 75(1):1--28, 2023.

\bibitem{Caddeocurves}
Renzo Caddeo, Stefano Montaldo, and Paola Piu.
\newblock Biharmonic curves on a surface.
\newblock {\em Rend. Mat. Appl. (7)}, 21(1-4):143--157, 2001.

\bibitem{Cheeger}
Jeff Cheeger.
\newblock Some examples of manifolds of nonnegative curvature.
\newblock {\em J. Differential Geometry}, 8:623--628, 1973.

\bibitem{Chen}
Bang-Yen Chen.
\newblock Some open problems and conjectures on submanifolds of finite type.
\newblock {\em Soochow J. Math.}, 17(2):169--188, 1991.

\bibitem{EellsLemaire}
James Eells and Luc Lemaire.
\newblock A report on harmonic maps.
\newblock {\em Bull. London Math. Soc.}, 10(1):1--68, 1978.

\bibitem{EellsSampson}
James Eells and Joseph~H. Sampson.
\newblock Harmonic mappings of {R}iemannian manifolds.
\newblock {\em Amer. J. Math.}, 86:109--160, 1964.

\bibitem{FuYuZhan}
Yu~Fu, Min-Chun Hong, and Xin Zhan.
\newblock Biharmonic conjectures on hypersurfaces in a space form.
\newblock {\em Trans. Amer. Math. Soc.}, 376(12):8411--8445, 2023.

\bibitem{GroveZiller}
Karsten Grove and Wolfgang Ziller.
\newblock Cohomogeneity one manifolds with positive {R}icci curvature.
\newblock {\em Invent. Math.}, 149(3):619--646, 2002.

\bibitem{Ho}
Pak~Tung Ho.
\newblock A first eigenvalue estimate for embedded hypersurfaces.
\newblock {\em Differential Geom. Appl.}, 26(3):273--276, 2008.

\bibitem{Iwata}
Koichi Iwata.
\newblock Classification of compact transformation groups on cohomology quaternion projective spaces with codimension one orbits.
\newblock {\em Osaka Math. J.}, 15(3):475--508, 1978.

\bibitem{JiangImmersions}
Guo~Ying Jiang.
\newblock {$2$}-harmonic isometric immersions between {R}iemannian manifolds.
\newblock {\em Chinese Ann. Math. Ser. A}, 7(2):130--144, 1986.
\newblock An English summary appears in Chinese Ann. Math. Ser. B {\bf 7} (1986), no. 2, 255.

\bibitem{Jiang}
Guo~Ying Jiang.
\newblock {$2$}-harmonic maps and their first and second variational formulas.
\newblock {\em Chinese Ann. Math. Ser. A}, 7(4):389--402, 1986.
\newblock An English summary appears in Chinese Ann. Math. Ser. B {\bf 7} (1986), no. 4, 523.

\bibitem{Lamm}
Tobias Lamm.
\newblock Biharmonic map heat flow into manifolds of nonpositive curvature.
\newblock {\em Calc. Var. Partial Differential Equations}, 22(4):421--445, 2005.

\bibitem{lawson1970complete}
H.~Blaine Lawson~Jr.
\newblock Complete minimal surfaces in {$S^3$}.
\newblock {\em Ann. Math.}, 92(3):335--374, 1970.

\bibitem{LoubeauIndex}
E.~Loubeau and C.~Oniciuc.
\newblock The index of biharmonic maps in spheres.
\newblock {\em Compos. Math.}, 141(3):729--745, 2005.

\bibitem{maeta2012k}
Shun Maeta.
\newblock {$k$}-harmonic maps into a riemannian manifold with constant sectional curvature.
\newblock {\em Proc. Am. Math. Soc.}, 140(5):1835--1847, 2012.

\bibitem{MontaldoOniciucSurvey}
Stefano Montaldo and Cezar Oniciuc.
\newblock A short survey on biharmonic maps between {R}iemannian manifolds.
\newblock {\em Rev. Un. Mat. Argentina}, 47(2):1--22, 2006.

\bibitem{MontaldoModels}
Stefano Montaldo, Cezar Oniciuc, and Andrea Ratto.
\newblock Rotationally symmetric biharmonic maps between models.
\newblock {\em J. Math. Anal. Appl.}, 431(1):494--508, 2015.

\bibitem{PolyharmonicSpaceForms}
Stefano Montaldo, Cezar Oniciuc, and Andrea Ratto.
\newblock Polyharmonic hypersurfaces into space forms.
\newblock {\em Israel J. Math.}, 249(1):343--374, 2022.

\bibitem{PolyharmonicHomogeneous}
Stefano Montaldo, Cezar Oniciuc, and Andrea Ratto.
\newblock Polyharmonic surfaces in 3-dimensional homogeneous spaces.
\newblock {\em Manuscripta Math.}, 174(1-2):31--58, 2024.

\bibitem{MontaldoNewExamples}
Stefano Montaldo and Andrea Ratto.
\newblock New examples of {$r$}-harmonic immersions into the sphere.
\newblock {\em J. Math. Anal. Appl.}, 458(1):849--859, 2018.

\bibitem{ou2012some}
Ye-Lin Ou.
\newblock Some constructions of biharmonic maps and {C}hen’s conjecture on biharmonic hypersurfaces.
\newblock {\em J. Geom. Phys.}, 62(4):751--762, 2012.

\bibitem{OuStability}
Ye-Lin Ou.
\newblock Stability and the index of biharmonic hypersurfaces in a {R}iemannian manifold.
\newblock {\em Ann. Mat. Pura Appl. (4)}, 201(2):733--742, 2022.

\bibitem{OuChenBook}
Ye-Lin Ou and Bang-Yen Chen.
\newblock {\em Biharmonic submanifolds and biharmonic maps in {R}iemannian geometry}.
\newblock World Scientific Publishing Co. Pte. Ltd., Hackensack, NJ, [2020] \copyright2020.

\bibitem{Puettmann}
Thomas P\"uttmann.
\newblock Cohomogeneity one manifolds and self-maps of nontrivial degree.
\newblock {\em Transform. Groups}, 14(1):225--247, 2009.

\bibitem{PuettmanSiffert}
Thomas P\"uttmann and Anna Siffert.
\newblock Harmonic self-maps of cohomogeneity one manifolds.
\newblock {\em Math. Ann.}, 375(1-2):247--282, 2019.

\bibitem{Sasahara}
Toru Sasahara.
\newblock Classification theorems for biharmonic real hypersurfaces in a complex projective space.
\newblock {\em Results Math.}, 74(4):Paper No. 136, 10, 2019.

\bibitem{SiffertSU(3)}
Anna Siffert.
\newblock Harmonic self-maps of {${\rm SU}(3)$}.
\newblock {\em J. Geom. Anal.}, 28(1):587--605, 2018.

\bibitem{Uchida}
Fuichi Uchida.
\newblock Classification of compact transformation groups on cohomology complex projective spaces with codimension one orbits.
\newblock {\em Japan. J. Math. (N.S.)}, 3(1):141--189, 1977.

\bibitem{urakawa1988equivariant}
Hajime Urakawa.
\newblock Equivariant theory of {Y}ang-{M}ills connections over {R}iemannian manifolds of cohomogeneity one.
\newblock {\em Indiana Univ. Math. J.}, 37(4):753--788, 1988.

\bibitem{Urakawa}
Hajime Urakawa.
\newblock Equivariant harmonic maps between compact {R}iemannian manifolds of cohomogeneity {$1$}.
\newblock {\em Michigan Math. J.}, 40(1):27--51, 1993.

\end{thebibliography}

\end{document}